\numberwithin{equation}{section}
\newtheorem{theorem}{Theorem}[section]
\newtheorem{lemma}[theorem]{Lemma}
\newtheorem{cor}[theorem]{Corollay}
\theoremstyle{definition}
\newtheorem{remark}[theorem]{Remark}
\newcommand{\R}{\mathbb{R}}
\newcommand{\eps}{\varepsilon}
\newcommand{\be}{\begin{equation}}
\newcommand{\ee}{\end{equation}}
\newcommand\lt{\left}
\newcommand\rt{\right}
\def\Div{\textup{div}}
\def\HH{\mathcal{H}}
\def\tr{{\rm tr}}
\def\co{{\rm co}}
\def\sdist{{\rm sdist}}
\def\diam{{\rm diam}}
\def\spt{{\rm spt}}
\def\argmin{{\rm argmin}}
\title[A two-point function approach to connectedness of drops]{A two-point function approach to   connectedness of  drops in convex potentials}
\author[G. De Philippis]{Guido De Philippis}
\address{G.D.P.: SISSA, Via Bonomea 265, 34136 Trieste, Italy}
\email{guido.dephilippis@sissa.it}
\author[M. Goldman]{Michael Goldman}
\address{M.G.: Universit\'e Paris-Diderot, Sorbonne Paris-Cit\'e, Sorbonne Universit\'e,  CNRS,  Laboratoire Jacques-Louis Lions, LJLL, F-75013 Paris}
\email{goldman@math.univ-paris-diderot.fr}
\subjclass[2010]{49Q10 (58J05)}
\keywords{Two-point function, Second variation, Convex potential}
\begin{document}
\begin{abstract} 
We establish connectedness of volume constrained minimisers of energies involving surface tensions  and convex potentials.  By a previous result of McCann, this implies that  minimisers are convex in dimension two.
This positively answers   an old question of  Almgren.
We also prove convexity of minimisers when the volume constraint is dropped. Our proof is based on  the introduction of a new ``two-point function'' which measures the lack of convexity and which gives rise to a negative second variation of the energy.
\end{abstract}

\maketitle

\section{Introduction}
\subsection{Overview} Crystals and drops  subject to the action of an external potential are usually described by the following free energy (see \cite{hoffman})
\begin{equation}\label{e:en}
\mathcal F(E)=\int_{\partial ^*E} \Phi(\nu_{\partial^* E})+\int_E g,
\end{equation}
where the first integral is taken with respect to the $(d-1)-$dimensional Hausdorff measure and the second one with respect to the Lebesgue measure. 
Here \(E\subset \R^d\) is a set of finite perimeter representing  the volume occupied by the drop, \(\partial^*E\) denotes its  reduced boundary and  \(\nu_{\partial^*E}\)  its  outer normal (see  \cite{Maggi12} for more details). The potential   \(g:\R^d\to \R\) accounts for the external forces and
\(\Phi: \R^d\to \R\) is a  \emph{one-homogeneous} and {\em convex} function  which describes the (typically anisotropic) surface tension. Note that when the surface tension is isotropic, i.e. \(\Phi(\nu)=|\nu|\), the surface term reduces to the classical  De Giorgi perimeter, \(P(E)\).

It is commonly assumed that  minimisers of \eqref{e:en} under  a  volume constraint give a good description of the equilibrium shapes of drops. We thus  consider the following variational problem: 
\begin{equation}\label{e:min}\tag{\(P_V\)}
\min_{|E|=V} \mathcal F(E)
\end{equation}
and  its unconstrained counterpart:
\begin{equation}\label{e:minun}\tag{\(P\)}
\min_E \mathcal F(E).
\end{equation}

Let us also note that  the above variational problems naturally appear  in various other contexts such as:

\begin{itemize}
\item[-] In the limiting case in which \(g\) takes only the values \(0\) and \(+\infty\), \eqref{e:min} reduces to the isoperimetric problem in the domain \(\Omega=\{g<+\infty\}\) (see \cite{AlterCasellesChambolle05});
\item[-] The minimisation problem \eqref{e:minun} appears as one step of the Almgren, Taylor and Wang approximation of the   mean curvature flow (cf. \cite{AlmgrenTaylorWang93}).  There, \(g=\sdist(x,E_{k-1})\) is the signed distance to the \((k-1)\)-th step of the scheme, see also  \cite{LuckhausSturzenhecker95}.
\end{itemize} 

A natural question is to understand how properties of the surface tension \(\Phi\)  and of the potential \(g\) influence the shape of \(E\). In this paper we investigate the following question which is attributed to Almgren (see \cite{McCann98}):

\medskip
\noindent

{\bf Question:} {\em Let \(E\) be a minimiser of \eqref{e:min} and let us assume that \(g\) is convex. Is it true that \(E\) is convex?}

\medskip

Note that, if the  answer to this question is positive, it can only result from  a delicate interaction between the surface and the volume terms in \eqref{e:en}. Indeed, it is known that the answer is positive both for very small and very large volumes but
for totally different reasons.
On  the one hand, in the regime  \(V\ll 1\) and assuming that \(\Phi\) and \(g\) are sufficiently smooth, Figalli and Maggi   showed  in \cite{FigalliMaggi11} that  \(E\) is a smooth perturbation of the Wulff shape associated to \(\Phi\) (the ball if \(\Phi(\nu)=|\nu|\))
and is thus convex. On the other hand, in the regime \(V\gg1\), the set \(E\) should resemble the level set of \(g\) having  volume \(V\) and therefore must be  convex as well, see \cite{CasellesChambolle06} and  Remark \ref{rmk:largevol} below.

\medskip 

In the regime of intermediate mass   \(V\sim 1\), no term in the functional is predominant and  very little is known about the minimisers.  To the best of our knowledge, the only available result in this general  setting is due to McCann  (see \cite{McCann98}). Building on an unpublished paper of Okikiolu, he proved that   when \(d=2\), every connected component of \(E\) must be 
convex. Moreover, using ideas from optimal transport theory he proved that every such connected component is    uniquely minimising  \eqref{e:min}  for its own volume in the class of convex sets.

\subsection{Main results} The main result of this paper asserts that minimisers of \eqref{e:min} are always connected. More precisely we have the following result which is new even in the case of  isotropic surface tensions, \(\Phi(\nu)=|\nu|\).

\begin{theorem}\label{thm:main}
Let \(E\) be a minimiser of \eqref{e:min} and assume that \(g\in C^{1,\alpha}(\R^d)\) is a  convex and coercive function and that  \(\Phi\in C^{3,\alpha}(\R^d\setminus \{0\})\) is uniformly elliptic, i.e. 
\begin{equation}\label{e:unifellipt}
 \langle D^2 \Phi(\nu) \xi,\xi\rangle \ge (\xi- \langle \nu, \xi\rangle)^2   \qquad  \forall \ |\xi|=| \nu|=1.
\end{equation}
 Assume moreover then  $g$ is strictly  convex, then, \(\partial E\) is connected, in particular   \(E\)  is  indecomponible\footnote{Recall that a set of finite perimeter \(E\) is said to be indecomponible if for every partition \(E=E_1\cup E_2\) with \(|E_1\cap E_2|=0\) and \(P(E)=P(E_1)+P(E_2)\) then either \(|E_1|=0\) or \(|E_2|=0\). }  (and thus connected). In case \(d=2\), the strict convexity assumption is not needed.
\end{theorem}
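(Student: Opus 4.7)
The natural strategy is a contradiction argument via stability and second variation. Suppose $\partial E$ has more than one component, so that $E$ decomposes as $E=E_1\cup E_2$ with $|E_1|,|E_2|>0$ and $P(E)=P(E_1)+P(E_2)$. The uniform ellipticity \eqref{e:unifellipt} together with $g\in C^{1,\alpha}$ gives $C^{2,\alpha}$ regularity of each $\partial E_i$ by the standard anisotropic regularity theory; the Euler--Lagrange equation
\[
H_\Phi(\nu)+g=\lambda
\]
then holds on each component with the \emph{same} Lagrange multiplier $\lambda$ (since one can vary $E_1$ and $E_2$ separately with volume-preserving normal perturbations). Minimality of $E$ under $(P_V)$ promotes this to the stability inequality $\delta^2\F(E)[\zeta]\ge 0$ for every $\zeta\in C^1(\partial E)$ with $\int_{\partial E}\zeta=0$, and the goal becomes to exhibit some $\zeta$ violating it.

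The main idea is to build $\zeta$ from a \emph{two-point} construction that encodes the lack of convexity of $E$. For each admissible pair $(x_1,x_2)\in\partial E_1\times\partial E_2$ whose midpoint $m=(x_1+x_2)/2$ lies outside $E$ (which happens for an open set of pairs because the components are disjoint), the strict convexity of $g$ gives the strict inequality
\[
g(x_1)+g(x_2)-2g(m)>0.
\]
This positive quantity is the engine of the argument. One places a small outward normal bump on $\partial E_1$ near $x_1$ and a compensating inward normal bump on $\partial E_2$ near $x_2$ (so that $\int_{\partial E}\zeta=0$), and then \emph{averages} the resulting family of test functions against a weight on $\partial E_1\times\partial E_2$. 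The choice of weight is the ``two-point function'' $u(x,y)$ of the title; it should vanish when $(x+y)/2\in\ov E$ and be controlled by the convexity defect of $g$ along the chord $[x,y]$. In the decomposition
\[
\delta^2\F(E)[\zeta]=\int_{\partial E}\bigl\langle D^2\Phi(\nu)D_\tau\zeta,D_\tau\zeta\bigr\rangle-\int_{\partial E}|B_\Phi|^2\zeta^2+\int_{\partial E}(\nb g\cdot\nu)\,\zeta^2,
\]
I would use the Euler--Lagrange equation to rewrite $\nb g\cdot\nu$ in terms of the curvature, and combine what remains with the convexity gain above.

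The main obstacle is to carry out the averaging so that the local, \emph{positive} anisotropic Dirichlet and curvature terms are genuinely dominated by the \emph{global}, convexity-driven gain coming from the chords $[x_1,x_2]$. These live at different scales in the radius of the bumps, so the weight must be chosen to produce, after an integration by parts on $\partial E_1\times \partial E_2$, an elliptic inequality for $u$ comparable to the surface second variation; verifying that such a $u$ exists and exploits the full strict convexity of $g$ is the delicate analytic step. Finally, in the two-dimensional case, strict convexity should be bypassable by invoking McCann's theorem first: each component of $E$ is then convex, the chord between two distinct components is exterior to $E$ on a non-degenerate open segment, and convexity (not strict) of $g$ combined with the planar topology of $\partial E$ suffices to reach the same contradiction.
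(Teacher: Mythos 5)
Your general philosophy (contradiction via the second variation, with the convexity of $g$ as the engine) is the right one, but the proposal has two genuine gaps, and the step you defer as ``the delicate analytic step'' is in fact the entire content of the proof. First, the bump-averaging construction cannot work as described: a normal bump of unit height supported in a ball of radius $r$ on $\partial E$ contributes a Dirichlet term $\int\langle D^2\Phi(\nu)\nabla\zeta,\nabla\zeta\rangle\sim r^{d-3}$ against zeroth-order terms of size $r^{d-1}$, so localized perturbations always have positive second variation and no weight on $\partial E_1\times\partial E_2$ can rescue this unless the average collapses into a single smooth global test function --- at which point you must still guess what that function is. The paper's answer is the two-point function $S_{\partial E}(x)=\sup_{y\in\partial E}\langle\nu_{\partial E}(x),y-x\rangle$, which measures the failure of convexity at $x$; the key computation (Lemma \ref{lm:key}, via first- and second-order optimality at the maximising pair $(\bar x,\bar y)$, Codazzi, and the Euler--Lagrange equation \eqref{e:stat}) shows that it satisfies $L_\Phi S-D_\nu g\,S\ge\omega(S)\ge0$ in the viscosity and distributional sense, where $\omega$ is the modulus of convexity of $g$; plugging $S$ into the stability inequality then forces $S\equiv0$. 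Your midpoint inequality $g(x_1)+g(x_2)-2g(m)>0$ plays no role in this mechanism (and holds whether or not $m$ lies outside $E$); what is actually used is the convexity defect $g(\bar y)-g(\bar x)-\langle Dg(\bar x),\bar y-\bar x\rangle$ along the segment joining a boundary point to the farthest point in the direction of its normal.

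Second, and independently, you cannot insist on $\int_{\partial E}\zeta=0$: the function $S$ is nonnegative, so it is never an admissible volume-preserving variation unless it already vanishes. The paper's structural observation --- which is missing from your proposal and is precisely where disconnectedness enters --- is that if $\partial E=M_1\cup M_2$ with both pieces of positive measure, then at least one piece must be stable with respect to \emph{all} variations, not only mean-zero ones (otherwise one takes destabilising $\varphi_1,\varphi_2$ supported on each piece, normalised so that $\varphi_1-\varphi_2$ has zero average, and contradicts \eqref{e:stab}). On that unconditionally stable component one may test with $S$, conclude $S\equiv0$, hence that the component coincides with $\partial\co(E)$, and then the outward-minimising property \eqref{e:minextern} kills the other component. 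Finally, your $d=2$ remark is only a restatement of the goal: the paper's argument there still requires the subsolution property of $S$, the strong minimum principle on the stable component when it meets $\partial\co(E)$, and a separate case analysis (using McCann's convexity of each component and \eqref{e:minextern}) when it does not.
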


From the result of  McCann (see \cite{McCann98}) this  immediately positively answers Almgren's question in dimension \(2\).

\begin{cor}\label{cor:2d}
Assume \(d=2\) and let \(E\subset \R^2\) be a minimiser of \eqref{e:min} with  \(g\in C^{1,\alpha}(\R^2)\) a convex and coercive function  and  \(\Phi\in C^{3,\alpha}(\R^2\setminus \{0\})\)  uniformly elliptic. Then  \(E\)  is convex and unique.
\end{cor}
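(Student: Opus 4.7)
The plan is to combine Theorem \ref{thm:main} (specialised to $d=2$, where strict convexity of $g$ is not needed) with McCann's structural result from \cite{McCann98}. First I would invoke Theorem \ref{thm:main} to conclude that any minimiser $E$ of \eqref{e:min} is indecomposable, so that $E$ consists of a single connected component (up to Lebesgue-negligible modifications). Since in dimension two McCann has shown that every connected component of such a minimiser must be convex, applying this to the unique component of $E$ yields convexity of $E$ itself. No new analytic input is required for this step: it is a direct juxtaposition of the connectedness statement proved here with the component-wise convexity established in \cite{McCann98}.

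For uniqueness, I would appeal to the second half of McCann's theorem: each connected component of a minimiser of \eqref{e:min} is the unique minimiser of the same problem within the class of convex sets of its own volume. Applied to $E$, this says that $E$ is the unique convex minimiser of $\mathcal F$ under the constraint $|E|=V$. If $E'$ is any other minimiser of \eqref{e:min}, the previous paragraph (applied to $E'$) shows that $E'$ is also connected and convex of volume $V$; McCann's uniqueness among convex competitors then forces $E'=E$. Coercivity of $g$ is implicitly used to rule out translations as distinct minimisers, since it ensures the position of any minimiser is pinned down.

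The only delicate point I would want to verify is that the regularity and ellipticity hypotheses of Theorem \ref{thm:main} are compatible with (indeed, stronger than) the assumptions under which McCann's results in \cite{McCann98} are stated, so that the two results can be chained without loss. Once this is checked, the corollary follows without any further computation, since the genuinely new analytic content (the existence of a connectedness-improving perturbation via the two-point function) is entirely absorbed into Theorem \ref{thm:main}.
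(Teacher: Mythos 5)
Your proposal is correct and follows exactly the route the paper intends: Theorem \ref{thm:main} (in its $d=2$ form, without strict convexity) gives connectedness, McCann's component-wise convexity then gives convexity of $E$, and McCann's uniqueness among convex competitors of the same volume gives uniqueness. The paper itself presents the corollary as an immediate consequence of these two ingredients, so no further comment is needed.
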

\begin{remark}
 By approximation of $g$ and $\Phi$, this implies that if $d=2$, for every convex and coercive $g$ 
 and every convex anisotropy $\Phi$, for every volume $V$ there {\em exists} a minimizer of \eqref{e:min} which is convex. However, we can not rule out the existence of other, possibly disconnected, minimizers.
\end{remark}

For the unconstrained problem \eqref{e:minun} we are actually able to show convexity of minimisers.

\begin{theorem}\label{thm:main2}
Let \(E\) be a  minimiser of \eqref{e:minun} and assume that \(g\in C^{1,\alpha}(\R^d)\) is a convex and coercive function and that  \(\Phi\in C^{3,\alpha}(\R^d\setminus \{0\})\) satisfies   \eqref{e:unifellipt}, then  \(E\)  is convex. 
\end{theorem}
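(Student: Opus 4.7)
The strategy is to argue by contradiction using the two-point function technique announced in the abstract: assuming $E$ is not convex, I would build a perturbation of $E$ whose second variation of $\mathcal{F}$ is strictly negative, contradicting minimality. The unconstrained setting is cleaner than that of \eqref{e:min} because both the Euler--Lagrange equation and the non-negativity of the second variation apply to \emph{all} perturbations, not only to volume-preserving ones; this is, morally, why strict convexity of $g$ is not required here.

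First I would set up existence and regularity. Coercivity of $g$ together with the direct method produce a bounded minimizer $E$ (if $E$ escaped every compact set the bulk term would diverge). Standard regularity theory for anisotropic almost-minimizers with $C^{1,\alpha}$ bulk and $C^{3,\alpha}$ uniformly elliptic surface density gives $C^{2,\alpha}$ regularity of $\partial^* E$ outside a singular set $\Sigma$ of Hausdorff dimension at most $d-8$. The Euler--Lagrange equation then reads $H_\Phi = -g$ on $\partial^* E$, where $H_\Phi = \textup{div}_\tau(\nabla\Phi(\nu))$ is the $\Phi$-mean curvature, with \emph{no} additive Lagrange constant.

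Next, assuming $E$ is not convex, some two-point function $\phi$ on $\partial^* E \times \partial^* E$ -- a natural candidate being a $\Phi$-anisotropic variant of $\langle \nu(x), y - x\rangle$, measuring the signed failure of the tangent half-space at $x$ to separate $y$ from $E$ -- attains a strictly positive value at some pair $(\bar x, \bar y)$. Using this pair I would construct a one-parameter family $E_t$ of comparison sets by deforming $\partial E$ in coupled normal directions near $\bar x$ and near $\bar y$, the coupling being encoded by $\phi$ itself. Expanding $\mathcal{F}(E_t)$ to second order in $t$ produces a surface quadratic form, bounded below via \eqref{e:unifellipt}, together with a cross-term and a bulk contribution depending on $g$. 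The geometric role of $\phi(\bar x,\bar y)>0$ is precisely to give the cross-term a strictly negative sign of sufficient size to overpower the coercive surface contribution, so that the total second variation is negative, contradicting unconstrained minimality.

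The main obstacle I anticipate is the precise design of the coupled two-point perturbation, so that the cross-terms between the pieces of the deformation supported near $\bar x$ and near $\bar y$ combine with definite sign while interacting correctly with both $\Phi$ and $g$. The uniform ellipticity bound \eqref{e:unifellipt} is tailored to provide the needed coercivity on the surface quadratic form, and mere convexity (not strict convexity) of $g$ suffices because the strictly negative slack is provided geometrically by $\phi(\bar x,\bar y)>0$. A secondary technicality is ruling out that $(\bar x,\bar y)$ meets $\Sigma$; I would handle this either by an approximation argument or directly, using $\dim \Sigma \leq d-8$ together with the generic nature of the two-point analysis.
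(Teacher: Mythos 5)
You have correctly identified the high-level strategy (a two-point function measuring non-convexity, fed into the second variation, which for the unconstrained problem applies to all perturbations), but the concrete mechanism you propose does not work, and it is not the paper's. The second variation of $\mathcal F$ along a normal perturbation with speed $\varphi$ is the \emph{local} quadratic form $Q(\varphi)=\int_{\partial E}\langle D^2\Phi(\nu)\nabla\varphi,\nabla\varphi\rangle-\tr(D^2\Phi(\nu)A^2)\varphi^2+D_\nu g\,\varphi^2$. If you deform only near $\bar x$ and near $\bar y$ with disjointly supported pieces $\varphi_1,\varphi_2$, then $Q(\varphi_1+\varphi_2)=Q(\varphi_1)+Q(\varphi_2)$: there is \emph{no} cross-term between the two regions, so the "strictly negative cross-term of sufficient size" you invoke simply does not exist in the energy expansion. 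The cross-term that drives the argument lives elsewhere: in the paper one takes as test function the \emph{global} function $S_{\partial E}(x)=\sup_{y\in\partial E}\langle\nu(x),y-x\rangle$ itself, and the doubling-of-variables trick is used pointwise, in the second-order optimality conditions for $(x,y)\mapsto\varphi(x)-\langle\nu(x),y-x\rangle$ at its minimum, contracted against $D^2\Phi(\nu)\geq 0$; this is where the favourable $-2A_{ij}$ cross-term appears (Lemma \ref{lm:key}) and yields the viscosity inequality $L_\Phi S-D_\nu g\,S\geq g(\bar y)-g(\bar x)-\langle Dg(\bar x),\bar y-\bar x\rangle\geq 0$ after inserting the Euler--Lagrange equation $H^\Phi=-g$. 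Plugging $S$ into the stability inequality then gives $0\leq Q(S)\leq 0$.

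Your explanation of why mere convexity of $g$ suffices is also incorrect, and it hides the genuinely delicate part of the proof. When $g$ is only convex there is no "strictly negative slack": one obtains $Q(S)=0$, i.e.\ $S$ is a null direction of the stability form, not a strictly destabilizing one. The paper must then show that $S$ solves the Jacobi equation $L_\Phi S-D_\nu g\,S=0$ exactly, apply the strong minimum principle on a connected component of $\partial E\setminus\Sigma$ meeting $\partial\co(E)$ (where $S=0$) to get $S\equiv 0$ there, and finally rule out interior cavities $E=\co(E)\setminus F$ using the outward-minimizing property \eqref{e:minextern}. None of these steps is present or replaceable in your outline. Two smaller points: for anisotropic $\Phi$ the available bound on the singular set is $\HH^{d-3}(\Sigma)=0$, not dimension $d-8$; and the regularity of the maximizing point $\bar y$ is not handled by genericity but by the observation that $E$ lies on one side of a supporting hyperplane at $\bar y$, which forces $\bar y$ to be a regular point (Lemma \ref{lm:reg}).
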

\begin{remark}
 In Theorem \ref{thm:main} we must require strict convexity of $g$ in the case $d>2$ since in that case we are not able to guarantee the existence of a stable connected component of $\partial E$ intersecting\footnote{Here and in the sequel \(\co(F)\) denotes the \emph{closed convex envelope} of a set \(F\).}  $\partial \co(E)$.

However, we  believe  that our results actually hold true for more general surface tensions $\Phi$ (for instance crystalline ones) and possibly non-smooth  convex potentials $g$.
\end{remark}

\begin{remark}\label{rmk:largevol}
As mentioned above, convexity of solutions of \eqref{e:min} is known for large volumes. Indeed, even though the precise statement  is not written explicitly anywhere, it follows from \cite[Theorem 3]{CasellesChambolle06} arguing exactly as in \cite[Lemma 5.1]{CasellesChambolle06} (see also  \cite[Theorem 5.6]{ChambolleGoldmanNovaga13} and  \cite[Theorem 11]{AlterCasellesChambolle05}).
The proof is based on a calibration technique to show that for large volumes, solutions of \eqref{e:min} are given by sublevel-sets of the (unique) local minimizer $u$ of 
\[
 \int |Du|+ \frac{1}{2} \int (u-g)^2.
\]
This is coupled with the Alvarez-Lasry-Lions convexity result \cite{AlvarezLasryLions97} (see also \cite{Korevaar}) for solutions of elliptic PDEs to obtain the convexity of $u$. One of the outcomes 
of the proof is the existence of  \(\bar V>0\) such that, for all \(V\ge \bar V\) there exists \(\lambda_V\in \R\) for which 
\[
\textrm{\(E\) minimises \eqref{e:min}}\quad\iff\quad E\in \argmin\Big\{ \int_{\partial ^*F} \Phi(\nu_{\partial^* F})+\int_F (g-\lambda_{V})\Big\}.
\]
Taking this  equivalence for granted, convexity of solutions of \eqref{e:min} for \(V\ge \bar V\) could also be obtained  by applying Theorem \ref{thm:main2} with \(g\) replaced by \(g-\lambda_{V}\). 
\end{remark}


%

\subsection{Idea of proof and structure of the paper} The  proofs of our main  results are based on the \emph{two-point function technique}. This technique, quite standard in the context of viscosity solutions \cite{CrandallIshiiLions92} (where it goes by the name of doubling of variables trick), 
has been introduced in the realm of geometric problems by Andrews in \cite{Andrews12}  to show preservation of an interior ball condition   along  the mean curvature flow. Brendle in \cite{Brendle13} improved   these ideas to  the ``elliptic'' setting  to  prove Lawson's conjecture about minimal torii in \(\mathbb S^3\). 
Several applications of these ideas can be find in the nice  survey paper of Andrews, \cite{Andrews15}.

 In the   examples mentioned above, the two-point function which is  used is a ``non-infinitesimal'' version of the norm of the  second fundamental form of \(\partial E\), namely
 \begin{equation*}
 I_{\partial E}(x)= 2\sup_{y\in \partial E} \frac{\langle \nu_{\partial E}(x), y-x\rangle}{|x-y|^2}.
 \end{equation*}
Assuming  for simplicity that \(\Phi(\nu)=|\nu|\) and  \(g=0\), this  can be  shown to be a subsolution of   Simons equation\footnote{We denote by $A_{\partial E}$ the second fundamental form of $\partial E$ and by $H_{\partial E}=\tr(A_{\partial E})$ its mean curvature.}:
\begin{equation*}
(\Delta+|A_{\partial E}|^2)I_{\partial E}\ge c(d)\frac{|\nabla I_{\partial E} |^2}{I_{\partial E}}-H_{\partial E} I_{\partial E}^2
\end{equation*}
or  of its parabolic variant. Then, suitable applications of the maximum principle allow to conclude.
  
  Here we use a different function which measures the ``non-convexity'' of \(\partial E\):
  \begin{equation}\label{e:Sintro}
  S_{\partial E}(x)=\sup_{y\in \partial E}\langle \nu_{\partial E}(x), y-x\rangle.
  \end{equation}
 See  also \cite{Weinkove18} where a similar function is used in a different context. This turns out to be a  positive subsolution of the Jacobi equation:
 \begin{equation*}
 (\Delta+|A_{\partial E}|^2)S_{\partial E}\ge0
 \end{equation*}
  and thus induces a negative second variation of the energy. In the case of the unconstrained minimisation problem  \eqref{e:minun}  this is enough to conclude since  
  every minimiser must have positive second variation. In  the case of the volume  constrained minimisation problem \eqref{e:min}, $S_{\partial E}$ is no longer  an admissible variation
  since  it does not respect the volume constraint  and hence we are not able to conclude that  \(E\) is convex. Nevertheless if \(\partial E\) is  disconnected  at least one of its connected components must  be stable with respect to all possible variations i.e. non necessarily volume preserving, and this gives the desired contradiction. 
 
 Let us  remark that in contrast to \(I_{\partial E}\), our function \(S_{\partial E}\) does not have an  ``infinitesimal'' version. However, while \(I_{\partial E}\) is a subsolution of Simons equation, ours is directly a subsolution of the Jacobi equation which is easier to use  in combination with the stability inequality. 
 Let us also note that the doubling of variable trick  allows to get rid of a term with an  unfavorable sign, see Lemma \ref{lm:key} below\footnote{This is related to the following simple observation: if a symmetric block matrix is positive,
 \[
 \begin{pmatrix}
 A & B
 \\
 B&-C 
 \end{pmatrix}
 \ge 
 \begin{pmatrix}
 0 & 0
 \\
 0&0 
 \end{pmatrix},
 \]
 then not only \(\tr (A)\ge \tr(C)\) but also \(\tr(A)\ge\tr(C) \pm 2\tr(B)\).}.

  We point out  that it should be clear from the above sketch of the argument that our results actually apply to (sufficiently smooth) stable critical points. Indeed, minimality is only used to overcome some regularity issues, see Theorem \ref{thm:stable} and  Remark \ref{rmk:stability} below. 

 Finally we would like to underline the fact  that the combination of the two-point function technique with the assumption of positive second variation is quite natural. In fact, on a general stationary point the Jacobi operator does not satisfy the maximum principle. It is precisely the stability condition which guarantees its validity.  We hope that this simple observation can be useful also in other contexts. 
 
 Let us also mention the works  \cite{ SternZum2, SternZum1} where    Sternberg and Zumbrun  used (in a quite different way) the  stability inequality to prove connectedness of minimizers of  the relative isoperimetric problem inside  (strictly) convex sets.

  \medskip 
  The paper is structured as follows: in Section \ref{sec:pre} we recall some well-known properties of minimisers of \eqref{e:min} and \eqref{e:minun}, together with some basic facts of differential geometry. In Section \ref{sec:proof} we prove Theorem \ref{thm:main} and \ref{thm:main2}. Eventually in the Appendix \ref{sec:ap} we prove a simple approximation lemma.

 \subsection*{Acknowledgements}
This work has been conceived while the first author was a FSMP visiting professor  at the Laboratoire LJLL in  Paris. The support of the FSMP and the nice working environment of the Laboratoire LJLL  are greatly acknowledged.
G.~D.~P.\ is supported by the MIUR SIR-grant ``Geometric Variational Problems" (RBSI14RVEZ).

\section{Notation and preliminaries}\label{sec:pre}

In this section we collect a few results that will be used in the sequel and we fix some notation.

\subsection{Differential geometry}Most of the concepts and relations introduced here may be find in \cite{Carmo92} (see in particular \cite[Chapter 2.3]{Carmo92} and \cite[Chapter 6.2]{Carmo92}).  We let $(e_1,\cdots, e_d)$ be the canonical basis of $\R^d$. Given  a symmetric \(k\)-linear form \(B\), \(B[v_1,\dots, v_k]\) denotes its action on the \(k\)-tuple of vectors \(v_1,\dots, v_k\).  From now on we will use the symbol \(D\) for the flat connection on \(\R^d\), i.e. the classical componentwise derivative. In coordinates, this means that if $X=\sum_{i=1}^d X_i e_i$,  then
$ D_X Y=\sum_{i=1}^d X_i \partial_i Y$.

For a \(C^2\) regular \((d-1)\)-manifold \(M\) oriented by its  normal \(\nu_M\) and a vector field $X$, we will use \(\nabla_X\) to denote the covariant derivative on \(M\):
\[
\nabla _X Y=\boldsymbol{p}_{T_x M} D_XY
\]
where \(\boldsymbol{p}_{T_x M}\) is the orthogonal projection onto \(T_x M\). Let us recall that for $X$, $Y$, $Z$ vector fields, we have 
\begin{equation}\label{compatibility}
   D_X  \langle Y,Z\rangle= \langle D_X Y,Z\rangle +\langle Y, D_X Z\rangle,
\end{equation}
which can be easily checked using coordinates (see also \cite[Corollary 3.3]{Carmo92}). In particular, if $X$ and $Y$ are tangent then $\langle D_X Y,\nu_M\rangle=- \langle Y, \nabla_X \nu_M\rangle$. We may thus define the second fundamental form $A_M$ by its action of tangent vector fields as 
\begin{equation}\label{e:sff}
A_M(x)[X(x), Y(x)]= -\langle D_X Y(x),\nu_M(x)\rangle=\langle \nabla_X \nu_M(x),Y(x)\rangle=\langle \nabla_Y \nu_M(x),X(x)\rangle,
\end{equation}
where the last equality follows from the symmetry of $A_M$ (see \cite[Proposition 2.1]{Carmo92}). Note that with our convention \(A_M\) is positive if \(M\) is the boundary of a convex set oriented by its exterior normal.

   Given two tangent vector fields \(X\) and \(Y\) we have by \eqref{e:sff}
\begin{equation}\label{e:split}
D_X Y=\nabla_X Y+\langle D_X Y,\nu_M\rangle \nu_M=\nabla_X Y-A_M[ X, Y] \nu_M.
\end{equation}
For a \(C^1\) function \(f\) defined in a neighborhood of \(M\) we define  its tangential gradient \(\nabla f(x)\in  T_xM\)  as 
\[
\nabla f =\boldsymbol{p}_{T_x M} Df.
\] 
If  \(X\) is a field of tangent vectors,  we  set \(\nabla_X f=\langle \nabla f, X \rangle =\langle D f, X \rangle\). The  tangential Hessian \(\nabla^2 f (x): T_{x}M\times T_x M\to \R\) is given by its action on   tangential vector fields \(X\) and \(Y\) defined in a neighborhood of \(x\) as 
\begin{equation}\label{e:hessian-1}
\begin{split}
\nabla^2 f(x) [X(x),Y(x)]&=\langle \nabla_{X}(\nabla f),Y\rangle(x)\stackrel{\eqref{compatibility}}{=}\nabla_{X} (\nabla_{Y} f)(x)-\langle \nabla  f(x), D_X Y(x)\rangle 
\\
 &=D^2 f(x)[X(x),Y(x)]-A_M[X,Y](x)\langle Df(x),\nu_M(x)\rangle
\end{split}
\end{equation}
where  we have used that $\nabla_{X} (\nabla_{Y} f)= D^2 f[X,Y]+\langle D f,D_X Y\rangle$ and that 	by \eqref{e:split}, 
\[\langle D f, D_X Y\rangle= \langle \nabla  f, D_X Y\rangle- A_M[X,Y]\langle Df ,\nu_M\rangle.\]
Note  that the definition  of $\nabla^2 f$ depends only on the values of \(f\) on \(M\) and of \(X\) and \(Y\) at \(x\). The Laplace-Beltrami operator of \(f\) is then given by  
\[
\Delta_M f =\tr \nabla^2 f=\Div_{M} (\nabla f)
\]
where for a (not necessarily tangent) vector field \(Y\), the tangential divergence \(\Div_M \, Y\) is defined as
\[
\Div_M \, Y(x)= \sum_{i=i}^{d-1} \langle D_{\tau_i} Y(x), \tau_i(x)\rangle
\]
for a local tangent  orthonormal  frame \(\{\tau_i\}\) around \(x\). Observe  that this definition does not depend on the chosen frame.

If   \(\Phi\in C^2(\R^d\setminus\{0\})\) is one-homogeneous,   the {\em anisotropic mean curvature} \(H^\Phi_M\) of  an oriented \((d-1)\) manifold  \(M\) is defined  as
\begin{equation}\label{def:HPhi}
H^\Phi_M=\Div_{M} (D \Phi(\nu_M))=\tr ((D^2 \Phi (\nu_M) A_M). 
\end{equation}
When \(\Phi(\nu)=|\nu|\), this reduces to the classical mean curvature. Note that this definition is well posed since, by one-homogeneity, \(D^2\Phi(\nu_M(x))\nu_M(x)=0\) 
and thus \(D^2\Phi(\nu_M(x)): T_x M\to T_x M\). For the same reason the term  \(\tr(D^2\Phi(\nu_{\partial E}) A_{\partial E}^2)\) appearing in \eqref{e:stab} below is well defined.

\subsection{Properties of minimisers}
The following theorem summarises well-known properties of minimisers of \eqref{e:minun} and \eqref{e:min}.

\begin{theorem}\label{thm:promin}
Let us assume  that \(g\in C^{1,\alpha}(\R^d)\) satisfies 
\[
\lim_{|x|\to +\infty} g(x)=+\infty
\] 
 and that  \(\Phi\in C^{3,\alpha}(\R^d\setminus \{0\})\) is uniformly elliptic (recall \eqref{e:unifellipt}).  Then, for every \(V\in (0,+\infty)\) there exists a minimiser  of \eqref{e:min} (resp. \eqref{e:minun}).  Moreover any minimiser \(E\) of \eqref{e:min} (resp. \eqref{e:minun}) satisfies:
 \begin{itemize}
 \item[(i)] \(E\) is equivalent to an open bounded set.
 \item[(ii)]  Let \(\Sigma\) be the singular set of \(\partial E\), i.e.
 \[
 \Sigma=\big\{x\in \partial E \ : \  \partial E \textrm{ does not have a tangent plane at \(x\)}\big\}.
 \]
 Then, \(\Sigma\) is closed,  \(\HH^{d-3}(\Sigma)=0\) and  for all  \(x\in \partial E\setminus \Sigma\) there exists a neighbourhood \(U_x\) such that   \(E\cap U_x\) can be locally written as the epi-graph of a \(C^3\) function. In particular,  \(\partial E\setminus \Sigma\)   is a (relatively open) \(C^3\) manifold oriented by \(\nu_E\).
 \item[(iii)]There exists a constant \(\mu\) such that 
\begin{equation}\label{e:stat}
H_{\partial E}^ \Phi+g=\mu\qquad \textrm{for all \(x\in \partial E\setminus \Sigma \)}\qquad \textrm{(resp \(H_{\partial E}^ \Phi+g=0\)),}
\end{equation}
and
\begin{multline}\label{e:stab}
\int_ {\partial E\setminus \Sigma } \langle D^2 \Phi(\nu_{\partial E}) \nabla \varphi , \nabla \varphi  \rangle -\tr(D^2\Phi(\nu_{\partial E}	) A_{\partial E}^2) \varphi^2+ D_\nu g \,\varphi^2\ge 0
\\
\textrm{for all \(\varphi\in C^1_c(\partial E \setminus \Sigma) \) such that \(\int_{\partial E} \varphi=0\) (resp. for all \(\varphi\in C^1_c(\partial E \setminus \Sigma) \)).}
\end{multline}
\item[(iv)] Assume moreover that \(g\) has convex level sets, then 
\begin{equation}\label{e:minextern}
\int_{\partial E} \Phi(\nu_{\partial E})\le  \int_{\partial^* F} \Phi(\nu_{\partial^* F})\qquad\textrm{for all \(F\supset E\)}.
\end{equation}
\end{itemize}
\end{theorem}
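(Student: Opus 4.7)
The plan is to recover each item from the standard toolbox: direct method for existence, the regularity theory for almost-minimisers of anisotropic perimeter for (i) and (ii), first and second variation computations for (iii), and a truncation argument for (iv); the statement is essentially a synthesis of well-known facts, and most of the work consists in invoking the appropriate results in the anisotropic setting. For existence I would take a minimising sequence $(E_n)$: coercivity of $g$ confines the mass to a fixed bounded region, uniform ellipticity of $\Phi$ implies $\Phi(\nu) \ges |\nu|$ and hence a uniform bound on the classical perimeter yielding $L^1$-precompactness, and lower semicontinuity of the anisotropic surface energy together with continuity of $\int g$ produce a minimiser for both $(P_V)$ and $(P)$. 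Openness of a suitable representative and boundedness in (i) then follow from interior and exterior density estimates for $(\Lambda,r_0)$-minimisers of the anisotropic perimeter, combined with coercivity of $g$.

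For (ii), I would first verify that $E$ is a $(\Lambda, r_0)$-minimiser of the anisotropic perimeter by comparing $E$ with $E \triangle B_r(x)$: the bulk deficit is of order $r^d$ and is therefore absorbable by a universal $\Lambda$ on any fixed bounded region. The Almgren--Schoen--Simon--Taylor regularity theory for such minimisers applied to the smooth uniformly elliptic integrand $\Phi \in C^{3,\alpha}$ then yields $C^{1,\beta}$ regularity of the regular part with $\HH^{d-3}(\Sigma) = 0$, and a Schauder bootstrap using $g \in C^{1,\alpha}$ upgrades to $C^{3,\alpha}$.

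For (iii), once the regular part is $C^3$, I would consider normal perturbations of the form $x \mapsto x + t\,\varphi(x)\, \nu_{\partial E}(x) + O(t^2)$ with $\varphi \in C^1_c(\partial E \setminus \Sigma)$, adding a fixed second test bump in the volume-constrained case to enforce $|E_t|=V$ at all orders. The first variations of the anisotropic perimeter and of the bulk integral equal $\int_{\partial E} H^\Phi_{\partial E}\, \varphi$ and $\int_{\partial E} g\, \varphi$ respectively, so arbitrariness of $\varphi$ (with a Lagrange multiplier $\mu$ for the volume constraint, $\mu = 0$ in the unconstrained case) gives \eqref{e:stat}. The second variation is a lengthy but standard computation: after using the Euler--Lagrange identity to eliminate the matching first-order terms and integrating by parts along $\partial E\setminus \Sigma$, one arrives at the three terms appearing in \eqref{e:stab}, whose nonnegativity is precisely the second-order minimality condition.

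For (iv), given $F \supset E$, I would pick $t^*$ so that $G := F \cap \{g \le t^*\}$ satisfies $|G| = |E|$. Since $|G \setminus E| = |E \setminus G|$, while $G \setminus E \subset \{g \le t^*\}$ and $E \setminus G \subset \{g \ge t^*\}$, direct bookkeeping yields $\int_G g \le \int_E g$. The perimeter inequality $\int_{\partial^* G} \Phi(\nu) \le \int_{\partial^* F} \Phi(\nu)$ follows from the standard fact that intersection with a closed convex set does not increase the anisotropic perimeter, a consequence of the $1$-Lipschitz projection onto a convex set combined with the convexity and one-homogeneity of $\Phi$. Minimality of $E$ then gives $\mathcal F(E) \le \mathcal F(G)$, and combining with the two monotonicity inequalities produces \eqref{e:minextern}. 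The main obstacles I anticipate are bibliographic: quoting the correct anisotropic counterparts of the Schoen--Simon dimension bound for the singular set and of the perimeter monotonicity under intersection with a convex set in the $BV$ framework.
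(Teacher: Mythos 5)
Your proposal is correct and follows essentially the same route as the paper, whose proof of this theorem is itself just an assembly of the standard facts you list (direct method, \((\Lambda,r_0)\)-minimality plus the Almgren--Bombieri--Schoen--Simon--Almgren regularity theory, first and second variation, and the truncation argument of Figalli--Maggi for (iv)). Two small caveats: for \eqref{e:min} the comparison with \(E\Delta B_r(x)\) must be supplemented by a volume-restoring deformation before it yields \eqref{e:Lambdamin}, and the anisotropic cut lemma in (iv) does not follow from the \(1\)-Lipschitz projection (which controls \(\HH^{d-1}\) but not the \(\Phi\)-energy, since the normal changes); the correct mechanism is the calibration inequality \(\langle D\Phi(\nu_K),\nu_F\rangle\le\Phi(\nu_F)\) combined with the divergence theorem on \(F\setminus K\), applied to half-spaces and then to general convex sets by approximation.
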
 

Note that, if \(\Phi(\nu)=|\nu|\), then  the measure estimate in (ii) can be upgraded to \(\HH^{d-8+\eps}(\Sigma)=0\) for all \(\eps>0\).

\begin{proof}
The existence of a minimiser and the  boundedness of every minimiser is standard, see for instance \cite{FigalliMaggi11,De-PhilippisMaggi15} and \cite[Section 4.2]{FigalliMaggi11} for what concerns boundedness. Moreover, one can easily show that   there exist constants  \(\Lambda, r_0>\) depending only on the data such that any minimiser of \eqref{e:min} (resp. \eqref{e:minun})  is a \((\Lambda, r_0)\) minimiser of the energy \(F\mapsto \int_{\partial F} \Phi (\nu_{\partial F})\), namely 
\begin{equation}\label{e:Lambdamin}
\int_{\partial^* E} \Phi (\nu_{\partial^* E})\le \int_{\partial ^*F} \Phi (\nu_{\partial^* F})+\Lambda|E\Delta F|\qquad \textrm{for all \(F\) such that \(\diam(E\Delta F)\le r_0\)},
\end{equation}
see for instance \cite[Example 21.2]{Maggi12}. Claim (i) and (ii) follow, see \cite{Almgren76, Bombieri82,SchoenSimonAlmgren77} or \cite{De-PhilippisMaggi15,De-PhilippisMaggi17} where the theory is explained in the context of sets of finite perimeter.
Equations \eqref{e:stat} and \eqref{e:stab} come  from the first variation and second variation of \(\mathcal F\), see for instance \cite[Appendix A]{FigalliMaggi11} or the appendix of \cite{De-PhilippisMaggi17}.  The outer minimising property (iv) is established  in \cite[Appendix B]{FigalliMaggi11}, see also \cite{McCann98}.

\end{proof}


The  following is a simple criterion for regularity, its proof is classical and we  sketch it here for the  reader's convenience.

\begin{lemma}\label{lm:reg}
Let \(\Phi\) and \(g\) be as in Theorem \ref{thm:promin} and let $E$ be a minimizer of \eqref{e:min} or \eqref{e:minun}. Assume that \(\bar x\in \partial E\) and that  there exist \(r>0\) and a set \(G\) with \(C^1\) boundary such that 
\[
E\cap B(\bar x,r)\subset G\cap B(\bar x,r)\qquad \bar x \in \partial E\cap \partial G.
\] 
Then \(\bar x\) is a regular point, namely \(\bar x \in \partial E \setminus \Sigma\). 
\end{lemma}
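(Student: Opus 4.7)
The plan is to blow up at $\bar x$ and combine a strong maximum principle with the standard $\eps$-regularity theory for anisotropic $(\Lambda,r_0)$-minimisers.

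First, by Theorem \ref{thm:promin} and in particular the $(\Lambda,r_0)$-minimality \eqref{e:Lambdamin}, the blow-up analysis of $(\Lambda,r_0)$-minimisers of $\Phi$-perimeter applies at $\bar x$. For any sequence $\rho_k\dw 0$, up to a subsequence the rescaled sets $E_k:=(E-\bar x)/\rho_k$ converge in $L^1_{loc}$ to a tangent cone $C$ at $\bar x$, which is a global $\Phi$-minimising cone with vertex at the origin (and, being a blow-up, is itself a $(0,\infty)$-minimiser, i.e.\ the $\Lambda$-term disappears in the limit).

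Since $\partial G$ is $C^1$ at $\bar x$, the rescalings $(G-\bar x)/\rho_k$ converge to the closed half-space $H:=\{y\in\R^d:\langle y,\nu_G(\bar x)\rangle\le 0\}$. The hypothesis $E\cap B(\bar x,r)\subset G\cap B(\bar x,r)$ therefore passes to the limit to give $C\subset H$, with $0\in\partial C\cap\partial H$. The heart of the proof is now the following strong maximum principle in the anisotropic setting: a $\Phi$-minimising cone contained in a half-space and whose boundary touches $\partial H$ must coincide with $H$. This is where the uniform ellipticity \eqref{e:unifellipt} is used. One way to see it is a calibration argument: the constant vector field $X\equiv D\Phi(\nu_H)$ satisfies $\Div X=0$ and calibrates $\partial H$ in the sense that
\[
\int_{\partial^*F}\Phi(\nu_{\partial^*F})\ge\int_{\partial H}\langle X,\nu_H\rangle\,d\mathcal H^{d-1}
\]
for every $F\subset H$ with $F\Delta H$ of finite perimeter and compactly supported, with equality only if $F=H$ up to negligible sets. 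Applying this (after a suitable truncation) to $F=H\setminus((H\setminus C)\cap B_R)$ and using that $C$ is $\Phi$-minimising forces $|H\setminus C|=0$, hence $C=H$.

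Thus every tangent cone at $\bar x$ is a half-space, which means $\bar x$ is a point of vanishing $\Phi$-excess. By the Allard-type $\eps$-regularity theorem for $(\Lambda,r_0)$-minimisers of uniformly elliptic $\Phi$-perimeters (cf.\ \cite{Almgren76,Bombieri82,SchoenSimonAlmgren77} and the Euclidean-style presentation in \cite{De-PhilippisMaggi15,De-PhilippisMaggi17}), $\partial E$ is a $C^{1,\alpha}$ graph in a neighbourhood of $\bar x$ and, by the Schauder bootstrap for the quasilinear Euler--Lagrange equation $H^\Phi_{\partial E}+g=\mu$ (resp.\ $=0$), actually $C^3$. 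Therefore $\bar x\in\partial E\setminus\Sigma$, as claimed.

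The main obstacle is the strong maximum principle in the third step: in the isotropic case this is classical, and in the uniformly elliptic anisotropic case it ultimately reduces to an application of the Hopf lemma for the linearised anisotropic minimal surface operator once one knows a priori that both $C$ and $H$ are smooth near the touching point, which is itself furnished by the $\eps$-regularity theorem applied on the side where $\partial H$ is smooth. The calibration viewpoint sketched above bypasses this regularity issue and is, in our opinion, the cleanest route.
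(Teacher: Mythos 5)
Your proposal follows essentially the same route as the paper: blow up at $\bar x$ using the $(\Lambda,r_0)$-minimality \eqref{e:Lambdamin} and the density estimates, observe that any $L^1_{\rm loc}$ limit $E_\infty$ is a $\Phi$-minimiser contained in the half-space $H_{\nu_{\partial G}}$ with $0\in\partial E_\infty$, invoke a maximum principle to get $E_\infty=H_{\nu_{\partial G}}$, and conclude by $\eps$-regularity. The paper simply cites the maximum principle (\cite{SolomonWhite89}, \cite[Lemma 2.13]{De-PhilippisMaggi15}), whereas you try to prove it, and two points in your sketch deserve a warning. First, you assert that the blow-up is a \emph{cone}; for general uniformly elliptic anisotropies no monotonicity formula is available, so blow-ups of $(\Lambda,r_0)$-minimisers are not known to be conical --- fortunately neither the paper's argument nor yours actually needs this, since the touching/maximum principle only requires $E_\infty$ to be a minimiser contained in $H$ with $0\in\partial E_\infty$. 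Second, your calibration comparison is not closed as written: the competitor $F=C\cup(H\setminus B_R)$ satisfies $F\Delta H\subset\subset B_R$ (so the calibration inequality applies to it), but $F\Delta C=(H\setminus B_R)\setminus C$ is \emph{not} compactly supported, so the minimality of $C$ cannot be tested against $F$ directly; one should instead compare $C$ with $C\cup(H\cap B_R)$ and combine submodularity of the anisotropic perimeter with the calibration of $H$ (or run the density-estimate argument of \cite[Lemma 2.13]{De-PhilippisMaggi15}). With that comparison repaired --- or with the maximum principle simply quoted, as the paper does --- your argument is correct and coincides with the paper's.
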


\begin{proof}
By the classical  \(\varepsilon\)-regularity criteria, see for instance \cite[Section 3]{De-PhilippisMaggi15} or \cite{Bombieri82}, it is enough to show that there exists a sequence \(\varrho_k\downarrow 0\) and a half-space \(H\)  such that 
\[
E_k=\frac{E-\bar x}{\varrho_k}\to H.
\]
By classical density estimates the sequence \(\{E_k\}\) is relatively compact in the \(L^1_{\rm loc}\) topology and, by assumption,  any of its limit points \(E_\infty\) satisfies
\[
0\in \partial E_\infty\qquad E_\infty\subset H_{\nu_{\partial G}}=\{y \ : \ \nu_{\partial G}(\bar x)\cdot y\le 0\}.
\]
Moreover, using \eqref{e:Lambdamin} we get that \(E_\infty\) is a (unconstrained) minimiser of \(F\mapsto \int_{\partial F} \Phi (\nu_{\partial F})\). The maximum principle, see \cite{SolomonWhite89} or \cite[Lemma 2.13]{De-PhilippisMaggi15}, then forces \(E_\infty= H_{\nu_{\partial G}}\) and this concludes the proof.
\end{proof}

\section{Proof of Theorems \ref{thm:main} and \ref{thm:main2} }\label{sec:proof}

In this section we will prove Theorem \ref{thm:main} and \ref{thm:main2}. The main calculation consists in establishing  for a generic set $E$, a  differential inequality for the function $S_{\partial E}$ introduced in \eqref{e:Sintro}. 
Since these computations  do not depend  on the nature of the problem and since they can be of independent interest, we isolated them in the next subsection.

\subsection{The key computation} Let \(E\subset \R^d \) be a bounded  open set such that  its boundary   can be split as 
\[
\partial E= R_{\partial  E}\cup \Sigma_{ \partial E} 
\]
where \(R_{\partial E}\) is a \(C^3\) manifold oriented by \(\nu_{\partial E}\) and \(\Sigma_{\partial E}\) is a closed singular set with empty relative interior. Given \(x\in R_{\partial E}\) and \(y\in \partial E\), let us define 
\[
S_{\partial E}(x,y)=\langle \nu_{\partial E}(x),y-x\rangle
\]
and 
\begin{equation}\label{e:S}
S_{\partial E}(x)=\max_{y\in \partial E} S_{\partial E}(x,y)\ge 0,
\end{equation}
whee the maximum is attained since $\partial E$ is compact. Note that \(S_{\partial E}(x) \) is  defined only for \(x\in R_{\partial E}\) and it is  the supremum of a family of locally  uniformly \(C^2\) functions. In particular it is  locally Lipschitz and locally semi-convex.\footnote{This means that for any local chart \(\phi: M \to \R^{d-1}\),  the function \(S_{\partial E}(\phi^{-1}(x))+C|x|^2\) is convex for a suitable constant \(C\). Note that this does not depend on the choice of the chart.}
The following lemma  is elementary. 
\begin{lemma}\label{lm:conv}
Let \(x\in R_{\partial E}\), then  \(S_{\partial E}(x)=0\) if and only if \(x\in \partial \co(E)\cap R_{\partial E}\).
\end{lemma}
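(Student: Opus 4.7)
The plan is to prove both implications by exploiting the geometric meaning of $S_{\partial E}(x)=\langle \nu_{\partial E}(x),y^*-x\rangle$ as the signed distance (in the direction $\nu_{\partial E}(x)$) from the tangent hyperplane $T_x$ of $\partial E$ at $x$ to the farthest point of $\partial E$ in the outward direction.

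For the forward implication, I would assume $S_{\partial E}(x)=0$. By definition this means $\langle \nu_{\partial E}(x), y-x\rangle \leq 0$ for every $y\in \partial E$, so $\partial E \subset H^{-}:=\{z : \langle \nu_{\partial E}(x), z-x\rangle \leq 0\}$. Since $E$ is bounded and open, any point of $\R^d\setminus H^{-}$ can be connected by a path to infinity without crossing $\partial E$, so it lies in $\R^d\setminus\overline{E}$. Hence $\overline{E}\subset H^{-}$, and consequently $\co(E)\subset H^{-}$. Because $x\in \overline{E}\cap \partial H^{-}$, the hyperplane $\partial H^{-}$ supports $\co(E)$ at $x$, so $x\in \partial\co(E)\cap R_{\partial E}$.

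For the reverse implication, I would assume $x\in \partial\co(E)\cap R_{\partial E}$. Pick a supporting hyperplane $\Pi$ to $\co(E)$ at $x$ with outward unit normal $\nu'$, so that $\co(E)\subset\{z:\langle \nu', z-x\rangle \leq 0\}$. Since $x\in R_{\partial E}$, near $x$ the boundary $\partial E$ is a $C^{3}$ hypersurface with tangent plane $T_x$ and normal $\nu_{\partial E}(x)$. Because $\partial E\subset\overline{E}\subset\co(E)$ lies entirely on one side of $\Pi$ and $\partial E$ passes through $x$, the tangent plane $T_x$ must coincide with $\Pi$; comparing orientations (the exterior normal to $E$ agrees with the exterior normal to $\co(E)$ on the side of $\Pi$ where they both sit) yields $\nu_{\partial E}(x)=\nu'$. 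Therefore $\partial E\subset\{z:\langle \nu_{\partial E}(x), z-x\rangle\leq 0\}$, giving $S_{\partial E}(x)\leq 0$; since the choice $y=x$ forces $S_{\partial E}(x)\geq 0$, equality holds.

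The only mildly delicate point is to justify that the tangent plane at the regular point $x$ must coincide with the supporting hyperplane of $\co(E)$ and that their outward normals agree; once this geometric identification is in place, both directions are immediate. No serious obstacle is expected, as the statement is essentially a pointwise characterisation of convexity via supporting hyperplanes at smooth boundary points.
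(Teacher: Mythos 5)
Your proof is correct and follows essentially the same route as the paper, which simply observes that $S_{\partial E}(x)=0$ is equivalent to the tangent hyperplane $\{y:\langle\nu_{\partial E}(x),y-x\rangle\le 0\}$ supporting $E$ at $x$, which in turn is equivalent to $x\in\partial\co(E)\cap R_{\partial E}$. You merely spell out the two standard details (passing from $\partial E\subset H^-$ to $\overline{E}\subset H^-$, and identifying the supporting hyperplane with the tangent plane at a regular point), both of which are handled correctly.
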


\begin{proof}
\(S_{\partial E}(x)=0\) if and only if the hyperplane \(H=\{y: \langle\nu_{\partial E}(x), y-x\rangle\le 0\}\) is a supporting plane to \( E\) at \(x\), which is equivalent to \(x\in \partial \co(E)\cap R_{\partial E}\). 
\end{proof}


We now  define for \(\varphi \in C^2 (R_{\partial E})\), the Jacobi operator
\begin{equation}\label{def:LPhi}
\begin{split}
L_\Phi \varphi&=\Div_{\partial E} (D^2\Phi(\nu_{\partial E}) \nabla \varphi)+\tr(D^2 \Phi(\nu_{\partial E}) A_{\partial E}^2) \varphi.
\end{split}
\end{equation}
Notice that by integration by parts, if $\varphi\in C^2_c (R_{\partial E})$
\begin{equation}\label{e:ipp}
 \int_{R_{\partial E}} (-L_\Phi \varphi)\varphi+ D_\nu g \,\varphi^2= \int_{R_{\partial E}} \langle D^2 \Phi(\nu_{\partial E}) \nabla \varphi , \nabla \varphi  \rangle -\tr(D^2\Phi(\nu_{\partial E}) A_{\partial E}^2) \varphi^2+ D_\nu g \,\varphi^2.
\end{equation}
By a slight abuse of notation, we will identify the left-hand side and the right-hand side  of \eqref{e:ipp} even when $\varphi\in W^{1,2}(R_{\partial E})$.\\
The following is the key computation. Note that, thanks to Lemma \ref{lm:reg}, in the context of Theorem \ref{thm:main} the condition \(\bar y\in R_{\partial E}\) will be always satisfied (see Lemma \ref{lm:keyupgrade} below).
\begin{lemma}\label{lm:key} 
Let \(E\) be as above and let \(\bar x\in R_{\partial E}\). Assume that 
\[
S_{\partial E}(\bar x)=S_{\partial E}(\bar x, \bar y)
\]
with  \(\bar y\in R_{\partial E}\). Then, recalling the definition \eqref{def:HPhi} of   \(H^{\Phi}_{\partial E}\),
\begin{equation*}
L_\Phi S_{\partial E}(\bar x)\ge H^{\Phi}_{\partial E}(\bar x)-H^{\Phi}_{\partial E}(\bar y)+\langle \nabla H^{\Phi}_{\partial E}(\bar x), \bar y-\bar x\rangle
\end{equation*}
in the viscosity sense, meaning that for all \(\varphi \in C^2(R_{\partial E})\) such that 
\[
\varphi(x)-S_{\partial E}(x)\ge \varphi(\bar x)-S_{\partial E}(\bar x)=0,
\]
then 
\[
L_\Phi \varphi (\bar x)\ge H^{\Phi}_{\partial E}(\bar x)-H^{\Phi}_{\partial E}(\bar y)+\langle \nabla H^{\Phi}_{\partial E}(\bar x), \bar y-\bar x\rangle.
\]
\end{lemma}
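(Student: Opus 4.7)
The plan is to use the two-point function approach with the kernel $\tilde S(x,y):=\langle \nu_{\partial E}(x),y-x\rangle$ viewed on the product manifold $R_{\partial E}\times R_{\partial E}$. Given a test function $\varphi\in C^2(R_{\partial E})$ touching $S_{\partial E}$ from above at $\bar x$, the function
\[
F(x,y):=\varphi(x)-\tilde S(x,y)\ge S_{\partial E}(x)-\tilde S(x,y)\ge 0
\]
attains its (zero) minimum at $(\bar x,\bar y)$. First-order optimality in $y$ forces $\nu_{\partial E}(\bar x)\perp T_{\bar y}\partial E$, hence $\nu_{\partial E}(\bar x)=\pm\nu_{\partial E}(\bar y)$; a short geometric argument, based on the fact that $\bar y$ realises the supremum of $\langle\nu_{\partial E}(\bar x),\cdot\rangle$ on $\partial E$ so that $\bar E$ lies in the closed half-space $\{z:\langle\nu_{\partial E}(\bar x),z-\bar y\rangle\le 0\}$, rules out the minus sign and yields $\nu_{\partial E}(\bar x)=\nu_{\partial E}(\bar y)$, so that $T_{\bar x}\partial E=T_{\bar y}\partial E$ as subspaces of $\R^d$. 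First-order optimality in $x$, combined with $\nabla_{\tau_i}\nu_{\partial E}=A_{\partial E,ij}\tau_j$, gives $\nabla\varphi(\bar x)=A_{\partial E}(\bar x)[u]$, where $u$ is the tangential component of $\bar y-\bar x$ at $\bar x$; moreover $\varphi(\bar x)=S_{\partial E}(\bar x)$.

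The decisive step is second-order optimality: the tangential Hessian of $F$ on the product manifold at $(\bar x,\bar y)$ is positive semidefinite. I would pick an orthonormal basis $\{\tau_i\}$ of the common tangent space that diagonalises $D^2\Phi(\nu_{\partial E}(\bar x))$ with eigenvalues $\lambda_i>0$, and test the inequality $\nabla^2 F\ge 0$ against the paired diagonal vectors $(\tau_i,\tau_i)$ with weights $\lambda_i$. Summing produces
\[
\tr\bigl(D^2\Phi\,\nabla^2\varphi\bigr)(\bar x)\ge \tr\bigl(D^2\Phi\,\nabla^2_{xx}\tilde S\bigr)+2\tr\bigl(D^2\Phi\,\nabla^2_{xy}\tilde S\bigr)+\tr\bigl(D^2\Phi\,\nabla^2_{yy}\tilde S\bigr)
\]
at $(\bar x,\bar y)$. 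This is precisely the content of the block-matrix observation recorded in the footnote, and the crucial $+2\tr(D^2\Phi\,\nabla^2_{xy}\tilde S)$ term is the information that a naive one-variable barrier $\psi(x):=\tilde S(x,\bar y)$ would miss.

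Using $\nabla_{\tau_i}\nu=A_{ij}\tau_j$, $D_{\tau_i}\tau_j=-A_{ij}\nu$ in a geodesic frame at both $\bar x$ and $\bar y$, and formula \eqref{e:hessian-1}, a direct computation gives
\[
\tr\bigl(D^2\Phi\,\nabla^2_{xy}\tilde S\bigr)=H^\Phi_{\partial E}(\bar x),\qquad \tr\bigl(D^2\Phi\,\nabla^2_{yy}\tilde S\bigr)=-H^\Phi_{\partial E}(\bar y).
\]
Now set $\psi(x):=\tilde S(x,\bar y)$; since $\nabla\psi(\bar x)=\nabla\varphi(\bar x)$ and $\psi(\bar x)=\varphi(\bar x)$, the first-order ($D^3\Phi$) and zero-order ($\tr(D^2\Phi A^2)\psi$) parts of $L_\Phi\psi(\bar x)$ and $L_\Phi\varphi(\bar x)$ coincide, so
\[
L_\Phi\varphi(\bar x)-L_\Phi\psi(\bar x)=\tr\bigl(D^2\Phi(\nabla^2\varphi-\nabla^2_{xx}\tilde S)\bigr)(\bar x)\ge 2H^\Phi_{\partial E}(\bar x)-H^\Phi_{\partial E}(\bar y).
\]
Finally a direct computation of $L_\Phi\psi(\bar x)$, most cleanly done by writing $\psi(x)=\langle\nu(x),\bar y\rangle-\langle\nu(x),x\rangle$ and using the Jacobi-operator identities $L_\Phi\langle\nu,v\rangle=\langle\nabla H^\Phi,v\rangle$ for constant $v$ and $L_\Phi\langle\nu(x),x\rangle=H^\Phi+\langle\nabla H^\Phi,x\rangle$ (which come from applying the normal-variation formula for $H^\Phi$ to infinitesimal translations and dilations, respectively), gives $L_\Phi\psi(\bar x)=\langle\nabla H^\Phi_{\partial E}(\bar x),\bar y-\bar x\rangle-H^\Phi_{\partial E}(\bar x)$; adding this to the previous inequality produces the claim.

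The hard part will be the anisotropic bookkeeping: carefully tracking the first-order $D^3\Phi(\nu)$-remainders appearing both in the expansion of the divergence-form operator $\Div_{\partial E}(D^2\Phi(\nu)\nabla\varphi)$ and in the trace computations of $\nabla^2_{xx}\tilde S$, and verifying their exact cancellation so that the clean difference $L_\Phi\varphi(\bar x)-L_\Phi\psi(\bar x)=\tr(D^2\Phi(\nabla^2\varphi-\nabla^2\psi))(\bar x)$ really holds at $\bar x$. A secondary technical point will be justifying rigorously that the maximum is attained with matching orientations $\nu_{\partial E}(\bar x)=\nu_{\partial E}(\bar y)$; this rests only on the geometric fact that $\bar E$ is trapped in a supporting half-space at $\bar y$, although $E$ itself is not assumed to be convex.
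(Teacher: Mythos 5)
Your proposal is correct and follows the same core strategy as the paper: minimise $G(x,y)=\varphi(x)-S_{\partial E}(x,y)$ on the product manifold, extract $\nu_{\partial E}(\bar x)=\nu_{\partial E}(\bar y)$ and $\nabla\varphi(\bar x)=A_{\partial E}(\bar x)[(\bar y-\bar x)^{T}]$ from the first-order conditions, and test the positive semidefinite Hessian against the paired diagonal directions weighted by $D^2\Phi(\nu)$ --- your $\sum_i\lambda_i(\tau_i,\tau_i)$ pairing is exactly the paper's contraction $\Phi_{ij}(\nabla^x_i+\nabla^y_i)(\nabla^x_j+\nabla^y_j)G\ge 0$, and your traces $\tr(D^2\Phi\,\nabla^2_{xy}\tilde S)=H^\Phi(\bar x)$, $\tr(D^2\Phi\,\nabla^2_{yy}\tilde S)=-H^\Phi(\bar y)$ match \eqref{e:xy} and \eqref{e:yy}. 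Where you genuinely diverge is the endgame: the paper expands $\nabla^x_i\nabla^x_j S$ explicitly, producing the terms $\nabla^x_iA_{jk}\langle\tau^x_k,\bar y-\bar x\rangle$ and $\Phi_{ijl}A_{li}\nabla^x_j\varphi$, and then recombines them into $\langle\nabla H^\Phi(\bar x),\bar y-\bar x\rangle$ via Codazzi and the differentiated identity $H^\Phi=\Phi_{ij}A_{ij}$; you instead introduce the one-variable barrier $\psi(\cdot)=\tilde S(\cdot,\bar y)$, observe that $L_\Phi\varphi(\bar x)-L_\Phi\psi(\bar x)$ reduces to the pure second-order difference because $\varphi$ and $\psi$ agree to first order at $\bar x$ (which is legitimate once $L_\Phi$ is put in the non-divergence form \eqref{e:L}), and evaluate $L_\Phi\psi(\bar x)=\langle\nabla H^\Phi(\bar x),\bar y-\bar x\rangle-H^\Phi(\bar x)$ from the translation and dilation Jacobi-field identities $L_\Phi\langle\nu,v\rangle=\langle\nabla H^\Phi,v\rangle$ and $L_\Phi\langle\nu(x),x\rangle=H^\Phi+\langle\nabla H^\Phi,x\rangle$. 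These identities do hold for the anisotropic operator (a direct check with \eqref{e:3der} and \eqref{e:codazzi} reproduces them), so your argument closes; but be aware that the ``hard part'' you defer --- verifying them and the exact cancellation of the $D^3\Phi$ remainders --- is not really avoided: it is precisely the Codazzi computation the paper carries out by hand. What your packaging buys is conceptual clarity (the right-hand side $H^\Phi(\bar x)-H^\Phi(\bar y)+\langle\nabla H^\Phi(\bar x),\bar y-\bar x\rangle$ is visibly $2H^\Phi(\bar x)-H^\Phi(\bar y)+L_\Phi\psi(\bar x)$, i.e.\ diagonal-testing gain plus barrier), at the price of invoking the normal-variation formula for $H^\Phi$ as a black box.
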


\begin{proof}
Since the set  $E$ is fixed, for notational simplicity we drop the dependence on $E$ of the various quantities.  
Let \(\varphi\) be as in the statement of the lemma and note that the function
\[
G(x,y)=\varphi(x)-S(x,y) 
\]
achieves its minimum at \((\bar x, \bar y)\). Moreover, by assumption, it is \(C^2\) in a neighborhood of \((\bar x, \bar y)\).

\medskip
\noindent
{\em Step 1: Optimality conditions}:  In order to prove the theorem we are going to exploit the first and second order minimality conditions for \(G\) at the point \((\bar x, \bar y)\). To this end let us  choose two  local orthonormal frames \(\{\tau_i^x\}_{i=1}^{d-1}\) and \(\{\tau_i^y\}_{i=1}^{d-1}\)  around \(\bar x\) and \(\bar y\) respectively such that:
 \begin{align}
\langle \tau_i^x, \tau^x_j \rangle (\bar x) =\delta_{ij}, &\qquad \langle \tau^y_i, \tau^y_j \rangle(\bar y) =\delta_{ij}, \label{e:on1}
\\
(\nabla_{\tau^x_{i}}\tau^x_j) (\bar x)=0, &\qquad (\nabla_{\tau^y_{i}}\tau^y_j) (\bar y)=0.\label{e:osc-1}
\end{align}
Here \(\delta_{ij}\) is the Kronecker delta.  Letting  \(A_{ij}(x)= A(x)[\tau^x_i,\tau_j^x]\) we have from \eqref{e:sff} 
\begin{equation}\label{e:osc2}
\langle D_{\tau^x_{i}}\nu  ,\tau^x_j\rangle(\bar x)=A_{ij}(\bar x) \qquad \textrm{and} \qquad \langle  D_{\tau^y_{i}}\nu ,\tau^y_j\rangle(\bar y)=A_{ij}(\bar y). 
\end{equation}
Combing this with \eqref{e:split} and \eqref{e:osc-1}   we then obtain
\begin{equation}\label{e:osc}
D_{\tau^x_{i}}\tau^x_j (\bar x)=-A_{ij}(\bar x)\nu(\bar x)  \qquad \textrm{and} \qquad D_{\tau^y_{i}}\tau^y_j(\bar y)=-A_{ij}(\bar y)\nu(\bar y). 
\end{equation}

To simplify our notation, for a function \(f:\partial E\times \partial E\subset \R^d\times \R^d\to \R\) we set 
\[
\nabla_{i}^x f=\langle \nabla f,\tau_{i}^x\rangle \qquad \textrm{and} \qquad  \nabla_{i}^y f=\langle \nabla f,\tau_{i}^y\rangle, 
\]
where by a slight abuse of notation, we identify $\tau_i^x$ with $(\tau_i^x,0)\in T_x \partial E\times T_x \partial E$ and $\tau_i^y$ with the vector $(0,\tau_i^y)$. Similarly, we set 
\[
\nabla_{i}^x\nabla_j^x f= \nabla^2 f[\tau_{i}^x,\tau^x_{j}], \qquad \nabla_{i}^y\nabla_j^y f= \nabla^2 f[\tau_{i}^y,\tau^y_{j}],   \quad \textrm{and } \quad  \nabla_{i}^x\nabla_j^y f= \nabla^2 f[\tau_{i}^x,\tau^y_{j}]. 
\]
Note that thanks to \eqref{e:osc-1}, by \eqref{e:hessian-1} we have
\begin{equation}\label{e:hessian}
\nabla_{i}^x\nabla_j^x f(\bar x,\bar y)=\nabla_{i} ^x(\nabla_j^x f)(\bar x, \bar y) 
\end{equation}
and a similar expression for \(\nabla_{i}^y\nabla_j^y  f(\bar x,\bar y)\). Moreover, the trivial  relations
\[
\nabla_{\tau_i^x}\tau_{j}^y=0=\nabla_{\tau_j^y}\tau_{i}^x,
\]
also give
\begin{equation}\label{e:hessian2}
\nabla_{i}^x\nabla_j^y f=\nabla_{j}^y\nabla_i^x f=\nabla_{i}^x(\nabla_j^y f)=\nabla_{j} ^y(\nabla_i^x f). 
\end{equation}

We are now ready to compute the optimality conditions. To simplify our formulas, from now on we  we will always exploit Einstein summation convention over repeated indices. 

 Since \(G\) achieves its minimum at \((\bar x, \bar y)\) we have
\begin{align}
0&=\nabla_{i}^x G(\bar x,\bar y)=\nabla_{i}^x\varphi(\bar x)-\nabla_{i}^x S(\bar x, \bar y) \label{e:opt1},
\\
0&=\nabla_{i}^y G(\bar x,\bar y)=-\nabla_{i}^y  S(\bar x, \bar y). \label{e:opt2}
\end{align}
Using that \( \nabla_{\tau_i^x} x=\tau^x_i\) is perpendicular to \(\nu (x)\), we have by \eqref{e:osc2}
\begin{equation}\label{e:gradS}
\nabla_i^x S(x,y) =A_{ik}(x) \langle \tau^x_k,  y- x \rangle\qquad \textrm{and} \qquad  \nabla_{i}^y S(x,y) =\langle \nu(x), \tau^y_i\rangle.
\end{equation}
The first equation and \eqref{e:opt1} then  give 
\begin{equation}\label{e:gradphi}
\nabla_{i}^x \varphi(\bar x) =A_{ik}(\bar x) \langle \tau^x_k(\bar x), \bar y-\bar x \rangle. 
\end{equation}
Equation \eqref{e:opt2} instead gives that
\[
0=\langle \nu(\bar x), \tau_i^y(\bar y) \rangle,
\]
and thus \(\nu(\bar x)=\pm \nu (\bar y) \).  Actually since \(y\mapsto S(\bar x,y)\) attains its maximum at \(\bar y\), we must have  
\begin{equation}\label{e:normpar}
\nu(\bar x)=\nu (\bar y).
\end{equation}
In particular \(T_{\bar x} \partial E= T_{\bar y} \partial E\) and  the frames may be chosen such that 
\begin{equation}\label{e:on2}
\langle \tau^x_i(\bar x), \tau^y_j (\bar y)\rangle =\delta_{ij}.
\end{equation}
We now compute the Hessian of \(G\). 
 Recalling \eqref{e:hessian},
\begin{equation}\label{e:xx}
\begin{split}
\nabla_{i}^x\nabla_{j}^x G(\bar x, \bar y) &=\nabla_i^x\nabla_j^x \varphi(\bar x) -\nabla_i^x\nabla_j^x S(\bar x,\bar y)\\
&\stackrel{\eqref{e:gradS}\&\eqref{e:osc2}}{=}\nabla_{i}^x \nabla_{j}^x  \varphi(\bar x) -\nabla_{i}^x A_{jk}(\bar x) \langle \tau^x_k(\bar x), y-x \rangle \\
&\qquad \qquad  -A_{jk}(\bar x) \langle D_{\tau_i^x} \tau_k^x(\bar x), \bar y-\bar x\rangle + A_{jk}(\bar x)\langle \tau_k^x,\tau_i^x\rangle(\bar x)\\
&\stackrel{\eqref{e:osc}\&\eqref{e:on1}}{=}\nabla_{i}^x \nabla_{j}^x  \varphi(\bar x) -\nabla_{i}^x A_{jk}(\bar x) \langle \tau^x_k(\bar x), y-x \rangle \\
&\qquad \qquad +A_{jk}(\bar x)A_{ki}(\bar x)  \langle \nu(\bar x), \bar y-\bar x \rangle+A_{ij}(\bar x)
\\ 
&= \nabla_{i}^x \nabla_{j}^x  \varphi(\bar x) -\nabla_{i}^x A_{jk}(\bar x) \langle \tau^x_k(\bar x),\bar y-\bar x \rangle 
 +A_{jk}(\bar x)A_{ki}(\bar x) \varphi(\bar x)+A_{ij}(\bar x) .  
\end{split}
\end{equation}
where  in the last equality we have used  that \(\varphi(\bar x)=S(\bar x, \bar y)=\langle \nu(\bar x), \bar y-\bar x \rangle\).
Analogously, using  \eqref{e:hessian2} and \eqref{e:gradS} 
\begin{equation}\label{e:xy}
\nabla_{i}^x\nabla_{j}^y G(\bar x, \bar y) =- \langle D_{\tau_i^x} \nu(\bar x),\tau_j^y(\bar y)\rangle \stackrel{\eqref{e:osc2}}{=}-  A_{ik}(\bar x) \langle \tau_k^x(\bar x), \tau_j^y(\bar y)\rangle \stackrel{\eqref{e:on2}}{=}-A_{ij}(\bar x).
\end{equation}
As for the \(y\) derivatives we get, using  \eqref{e:gradS} and \eqref{e:osc},
\begin{equation}\label{e:yy}
\nabla_{i}^y \nabla_{j}^y G(\bar x, \bar y) =A_{ij}(\bar y) \langle \nu(\bar x), \nu (\bar y)\rangle\stackrel{\eqref{e:normpar}}{=} A_{ij}(\bar y).
\end{equation}

\medskip
\noindent
{\em Step 2: The case of the area functional}. To make the computations clearer,  we first treat the particular case in which \(\Phi=|\cdot |\). Note that in this setting (recall \eqref{def:LPhi})
\[
L_{\Phi}\varphi = \Delta_{\partial E}\,  \varphi+|A|^2 \varphi,
\]
and $H^\Phi=A_{ii}$ is the classical mean curvature. Since \((\bar x, \bar y)\) is a minimum point for \(G\), we have
\[
0\le  (\nabla_{i}^x+\nabla_{i}^y)(\nabla_{i}^x+\nabla_{i}^y) G (\bar x, \bar y)= \nabla_{i}^x\nabla_{i}^x G(\bar x, \bar y)+2\nabla_{i}^x\nabla_{i}^yG(\bar x, \bar y)+\nabla_{i}^y\nabla_{i}^y G(\bar x, \bar y).
\]
Using this,  \eqref{e:xx}, \eqref{e:xy} and \eqref{e:yy}, we get
\begin{equation}\label{e:quasi}
\begin{split}
\Delta_{\partial E}\,   \varphi(\bar x) &\ge  \nabla_{i}^x A_{ik}(\bar x) \langle \tau^x_k(\bar x), y-x \rangle -A_{ik}(\bar x) A_{ki}(\bar x) \varphi(\bar x) -A_{ii}(\bar x) +2 A_{ii}(\bar x)-A_{ii}(\bar y)\\
&= \nabla_{i}^x A_{ik}(\bar x) \langle \tau^x_k(\bar x), y-x \rangle -|A|^2 \varphi(\bar x)+H^{\Phi}(\bar x)-H^{\Phi}(\bar y).
\end{split}
\end{equation}
We now use Codazzi equations 
\begin{equation}\label{e:codazzi}
\nabla_{i}^x A_{jk}=\nabla_{k}^x A_{ij},
\end{equation}
to obtain that
\begin{equation*}
\nabla_{i}^x A_{ik}=\nabla_{k}^x A_{ii}=\nabla_{k}^x H^{\Phi} ,
\end{equation*}
which, combined with  \eqref{e:quasi} yields
\begin{equation*}
\Delta_{\partial E} \varphi(\bar x)+|A|^2\varphi(\bar x)\ge H^{\Phi}(\bar x)-H^{\Phi}(\bar y) +\langle \nabla H^{\Phi}(\bar x),\bar y-\bar x\rangle.
\end{equation*}
This  proves the lemma for \(\Phi=|\cdot|\)

\medskip
\noindent
{\em Step 3: The general case}. To deal with the general case we start by writing \(L_{\Phi}\) in non-divergence form. Let \(\{\tau^x_i\}\) be an orthonormal local frame  satisfying \eqref{e:on1}, \eqref{e:osc-1}, \eqref{e:osc2}, \eqref{e:osc} and let us set, for \(i,j,k\in\{1,\dots,(d-1)\}\),
\begin{equation*}
\Phi_{ij}(\nu)(x)=D^2\Phi (\nu(x))[\tau^x_{i}, \tau^x_{j}]\qquad \textrm{and }\qquad  \Phi_{ijk} (\nu)(x)=D^3\Phi(\nu(x))[\tau_i^x,\tau_{j}^x,\tau^x_{k}].
\end{equation*}
Note that, since \(D^2 \Phi(\nu)[\nu, \cdot]=0\) (by one-homogeneity) and since for all \(i\) and \(j\), \(D_{\tau_j^x}\tau_{i}^x(\bar x)\) is parallel to \(\nu(\bar x) \) (by \eqref{e:osc}) we  have that, at \(\bar x\),
\begin{equation}\label{e:3der}
\begin{split}
D_{\tau_k^x}\Phi_{ij}(\nu)&=D_{\tau_k^x} \big (D^2\Phi (\nu)[\tau^x_{i}, \tau^x_{j}]\big)
\\
&=D^3\Phi(\nu)[D_{\tau_{k}^x}\nu, \tau^x_{i}, \tau^x_{j}]+D^2\Phi(\nu)[D_{\tau_{k}^x}\tau^x_{i}, \tau^x_{j}]+D^2\Phi(\nu)[\tau^x_{i}, D_{\tau_{k}^x}\tau^x_{j}]
\\
&= \Phi_{ijl}(\nu) A_{lk},
\end{split}
\end{equation}
where we used  \eqref{e:osc2} and that by \eqref{compatibility}, $\langle D_{\tau_k^x}\nu,\nu\rangle=0$. For  \(\varphi\in C^2(R_{\partial E})\)  we then have at \(\bar x\)  
\[
\begin{split}
\Div_{\partial E} (D^2\Phi(\nu) \nabla \varphi)&= \big\langle D_{\tau^x_i} \big(D_{jk}^2\Phi(\nu)\nabla_{j}^x\varphi \,\tau^x_{k}\big),\tau^x_{i}\big\rangle 
\\
&\stackrel{\eqref{e:on1}\&\eqref{e:osc}}{=}  D_{\tau^x_i} \big(D_{ij}^2\Phi(\nu)\big) \nabla_{j}^x \varphi+\Phi_{ij}(\nu)  \nabla_i^x \big(\nabla_{j}^x\varphi \big)
\\
&\stackrel{\eqref{e:3der}}{=}\Phi_{ijl} A_{li}\nabla_{j}^x \varphi+\Phi_{ij}(\nu)\nabla_{i}^x\nabla_{j}^x \varphi.
\end{split}
\]
Hence, again at \(\bar x\), recalling   the definition of $L_\Phi$, \eqref{def:LPhi},
\begin{equation}\label{e:L}
\begin{split}
L_\Phi \varphi(\bar x)&=\Div_{\partial E} (D^2\Phi(\nu) \nabla \varphi)+\tr(D^2 \Phi(\nu) A^2) \varphi\\
&=\Phi_{ijl} A_{li}\nabla_j^x\varphi+\Phi_{ij}(\nu) \nabla_i^x\nabla_j^x\varphi+\Phi_{ij}(\nu) A_{ik}A_{kj} \varphi.
\end{split}
\end{equation}
Furthermore by \eqref{e:normpar} and \eqref{e:on2}, \(\Phi_{ij}(\nu)(\bar x)=\Phi_{ij}(\nu)(\bar y)\) and these are the components of a positive matrix. Hence, by minimality,
\[
0\le\Phi_{ij}(\nu)\nabla_i^x\nabla_{j}^x G(\bar x, \bar y)+2\Phi_{ij}(\nu)\nabla_i^x\nabla_{j}^y G(\bar x, \bar y)+\Phi_{ij}(\nu)\nabla_i^y  \nabla_{j}^y G(\bar x, \bar y).
\]
Using \eqref{e:xx}, \eqref{e:xy} and \eqref{e:yy} and  recalling the definition of $H^\Phi$, \eqref{def:HPhi},
\begin{equation}\label{e:mc}
H^\Phi=\Phi_{ij}(\nu)A_{ij}
\end{equation}
this implies
\begin{equation*}
\begin{split}
\Phi_{ij}(\nu)\nabla^x_i\nabla^x_j\varphi(\bar x)&\ge \Phi_{ij}(\nu) \nabla_{i}^xA_{jk}(\bar x) \langle \tau^x_k(\bar x), \bar y-\bar x \rangle 
\\
&-\Phi_{ij}(\nu) A_{jk}(\bar x)A_{ki} (\bar x)\varphi(\bar x)+H^\Phi(\bar x) -H^ \Phi(\bar y),
\end{split}
\end{equation*}
and thus by \eqref{e:L},
\begin{equation}\label{e:edai2}
 L_\Phi \varphi(\bar x)\ge \Phi_{ij}(\nu) \nabla_{i}^xA_{jk}(\bar x) \langle \tau^x_k(\bar x), \bar y-\bar x \rangle +\Phi_{ijl}(\nu) A_{li}(\bar x)\nabla_j^x \varphi(\bar x)+H^\Phi(\bar x) -H^ \Phi(\bar y).
\end{equation}

We now differentiate \eqref{e:mc} with respect to \(\tau_{k}^x\),   use \eqref{e:3der},  Codazzi equations \eqref{e:codazzi} and the symmetry of $\Phi_{ijl}$  to get at the point \(\bar x\), 
\[
\begin{split}
\nabla^x_{k} H^\Phi&=\Phi_{ij}(\nu)\nabla^x_{k}A_{ij}+\Phi_{ijl}(\nu)A_{lk}A_{ij}
\\
&=\Phi_{ij}(\nu)\nabla^x_{i}A_{jk}+\Phi_{ijl}(\nu)A_{jk}A_{li}.
\end{split}
\]
Multiplying this  equation by \(\langle \tau_{k}^x(\bar x),\bar y-\bar x\rangle\) and summing over \(k\) we get
\[
\begin{split}
\langle\nabla H^\Phi (\bar x),\bar y-\bar x\rangle&=\Phi_{ij}(\nu)\nabla_{i}^xA_{jk}(\bar x) \langle \tau^x_k(\bar x), \bar y-\bar x \rangle+\Phi_{ijl}(\nu)A_{jk}(\bar x) A_{li}(\bar x) \langle \tau^x_k(\bar x), \bar y-\bar x \rangle
\\
&\stackrel{\eqref{e:gradphi}}{=}\Phi_{ij}(\nu)\nabla_{i}^xA_{jk}(\bar x) \langle \tau^x_k(\bar x), \bar y-\bar x \rangle+ \Phi_{ijl} (\nu) A_{li}(\bar x) \nabla_{j}^x\varphi(\bar x).
\end{split}
\]
Plugging the above expression into \eqref{e:edai2}  we thus get
\[
L_\Phi \varphi (\bar x) \ge H^\Phi(\bar x) -H^ \Phi(\bar y) + \langle\nabla H^\Phi (\bar x),\bar y-\bar x\rangle,
\]
which concludes the proof.
\end{proof}

\subsection{Proof of Theorems \ref{thm:main} and \ref{thm:main2}} We now combine Lemma \ref{lm:key} with the convexity of $g$ to show that for minimizers of either \eqref{e:min} or \eqref{e:minun}, $S_{\partial E}$ gives a negative second variation. As already pointed out, 
this is the key observation for the proof of both Theorem \ref{thm:main} and Theorem \ref{thm:main2}. 

For $g\in C^{1}(\R^d)$, we say that  an increasing function $\omega$ is a modulus of convexity of $g$ if  
\begin{equation}\label{e:strictconv}
 g(y)-g(x)-\langle D g(x),y-x\rangle\ge \omega(|y-x|) \qquad \forall x, y \in \R^d. 
\end{equation}
Notice that of course, for every convex function zero is a modulus of convexity and if $g$ is  strictly  convex, it has a strictly positive modulus of continuity.

\begin{lemma}\label{lm:keyupgrade}
Let $g\in C^{1,\alpha}(\R^d)$ be  coercive and convex with modulus of convexity $\omega$, $\Phi\in C^{3,\alpha}(\R^d\backslash\{0\})$ be  uniformly elliptic and  one-homogeneous and let   \(E\)  be a minimizer of either \eqref{e:min} or \eqref{e:minun}. Let  \(S_{\partial E} \) be  the corresponding function defined in \eqref{e:S}. Then, 
\begin{itemize}
\item[(i)] For every \(x\in \partial E\setminus \Sigma\) the point \(\bar y\) achieving the maximum in the definition of \(S_{\partial E}\) is in \( \partial E\setminus \Sigma\).
\item[(ii)] \(S_{\partial E}\in W^{1,2}(\partial E \setminus \Sigma)\).
\item[(iii)] \(S_{\partial E}\) solves 
\begin{equation}\label{e:distr}
L_{\Phi} S_{\partial E}-D_{\nu} g \,S_{\partial E} \ge \omega(S_{\partial E}) \qquad \textrm{on \(\partial E\setminus \Sigma\)}
\end{equation}
both in the viscosity and in the distributional sense.
\end{itemize}
\end{lemma}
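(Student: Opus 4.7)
The plan is to combine Lemma \ref{lm:key} with the Euler--Lagrange equation in Theorem \ref{thm:promin}(iii) and the convexity of $g$. The stationarity identity $H^\Phi_{\partial E}+g\equiv\mathrm{const}$ on $R_{\partial E}$ is what converts the geometric right-hand side of Lemma \ref{lm:key} into a quantity that can be bounded from below by the modulus $\omega$.

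For (i) I will use Lemma \ref{lm:reg}. By definition of $\bar y$, every $y\in\partial E$ satisfies $\langle\nu_{\partial E}(x),y-\bar y\rangle\le 0$, so $\partial E$ lies in the closed half-space $H=\{z:\langle\nu_{\partial E}(x),z-\bar y\rangle\le 0\}$; since $E$ is open and bounded while $\R^d\setminus H$ is unbounded and disjoint from $\partial E$, connectedness forces $E\subset H$, and $\bar y\in\partial E\cap\partial H$. Lemma \ref{lm:reg} with $G=H$ then gives $\bar y\in R_{\partial E}$.

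For the viscosity part of (iii) I fix $\varphi\in C^2(R_{\partial E})$ touching $S_{\partial E}$ from above at $\bar x$ and let $\bar y$ realise the maximum, so by (i) $\bar y\in R_{\partial E}$ and Lemma \ref{lm:key} applies. The stationarity equation gives $H^\Phi_{\partial E}(\bar x)-H^\Phi_{\partial E}(\bar y)=g(\bar y)-g(\bar x)$, and taking its tangential gradient on $\partial E$ yields $\nabla H^\Phi_{\partial E}(\bar x)=-Dg(\bar x)+D_\nu g(\bar x)\,\nu_{\partial E}(\bar x)$. Substituting into Lemma \ref{lm:key} and using $\langle\nu_{\partial E}(\bar x),\bar y-\bar x\rangle=S_{\partial E}(\bar x)=\varphi(\bar x)$ one obtains
\[
 L_\Phi\varphi(\bar x)\ge g(\bar y)-g(\bar x)-\langle Dg(\bar x),\bar y-\bar x\rangle+D_\nu g(\bar x)\,\varphi(\bar x).
\]
By \eqref{e:strictconv} the first three terms are at least $\omega(|\bar y-\bar x|)$, and since $|\bar y-\bar x|\ge\langle\nu_{\partial E}(\bar x),\bar y-\bar x\rangle=\varphi(\bar x)$ and $\omega$ is increasing we conclude with $\omega(\varphi(\bar x))$, which is exactly \eqref{e:distr} in the viscosity sense.

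For (ii) and the distributional upgrade, the envelope formula \eqref{e:gradS} that already appears in the proof of Lemma \ref{lm:key} gives the pointwise bound $|\nabla S_{\partial E}(x)|\lesssim |A_{\partial E}(x)|\,\diam(E)$ on $R_{\partial E}$, so $S_{\partial E}$ is locally Lipschitz and locally semi-convex; combined with the $L^1(\partial E)$-integrability of $|A_{\partial E}|^2$ for the $(\Lambda,r_0)$-minimizers of Theorem \ref{thm:promin} this produces $S_{\partial E}\in W^{1,2}(\partial E\setminus\Sigma)$. The local semi-convexity also gives an a.e.\ pointwise Hessian via Alexandrov, so the viscosity inequality holds a.e.\ in the classical sense; together with the $W^{1,2}$ bound this integrates by parts against $C^1_c(R_{\partial E})$ test functions to yield the distributional formulation.

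\emph{Main obstacle.} The calculations in (i) and (iii) are essentially algebraic once Lemma \ref{lm:key} and the stationarity equation are available; the delicate point is (ii), because the gradient of $S_{\partial E}$ can blow up near $\Sigma$ at the rate of $|A_{\partial E}|$. Getting a genuinely global $W^{1,2}$ bound, and justifying that testing only against compactly supported $\varphi$ in $R_{\partial E}$ loses no information, rests on the $L^2$-integrability of the second fundamental form and on the codimension estimate $\mathcal H^{d-3}(\Sigma)=0$, which ensures that $\Sigma$ has vanishing $W^{1,2}$-capacity inside $\partial E$.
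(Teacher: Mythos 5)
Your parts (i) and (iii) follow the paper's proof essentially verbatim: (i) is Lemma \ref{lm:reg} applied to the supporting half-space $H$ determined by $\nu_{\partial E}(\bar x)$ at level $S_{\partial E}(\bar x)$, and (iii) is exactly the paper's computation --- Lemma \ref{lm:key} plus $\nabla H^\Phi_{\partial E}=-\nabla g$ from \eqref{e:stat}, the decomposition $Dg=\nabla g+D_\nu g\,\nu$, and finally $\omega(|\bar y-\bar x|)\ge\omega(S_{\partial E}(\bar x))$ since $S_{\partial E}(\bar x)\le|\bar y-\bar x|$ and $\omega$ is increasing. (For the viscosity-to-distributional upgrade the paper simply cites Ishii; your Alexandrov/semiconvexity route is a plausible alternative but, as written, you would still need to check that the singular part of the distributional Hessian of the semiconvex function $S_{\partial E}$ has the favourable sign when traced against $D^2\Phi(\nu)\ge0$, so the citation is the cleaner path.)

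The genuine gap is in (ii). You invoke ``the $L^1(\partial E)$-integrability of $|A_{\partial E}|^2$ for the $(\Lambda,r_0)$-minimizers of Theorem \ref{thm:promin}'', but no such statement appears in Theorem \ref{thm:promin}, and it is not a free consequence of $(\Lambda,r_0)$-minimality: a priori $|A_{\partial E}|$ can blow up near $\Sigma$, and the whole point of Step 2 of the paper's proof is to establish $|A_{\partial E}|\in L^2(\partial E\setminus\Sigma)$. The paper derives it from the \emph{stability inequality} \eqref{e:stab} together with the ellipticity bound $|A|^2\le\tr(D^2\Phi(\nu)A^2)$: in the unconstrained case one plugs $\varphi\equiv1$ (admissible by the capacity Lemma \ref{lm:cap}); in the volume-constrained case $\varphi\equiv1$ violates the zero-average constraint, and one must instead build a pair of nonnegative functions $\varphi_1,\varphi_2$ with $\int\varphi_1=\int\varphi_2$, $\varphi_1\equiv1$ on a neighbourhood $N_1$ of $\Sigma$ and $\spt\varphi_2\subset\partial E\setminus N_2$, and test with $\varphi_1-\varphi_2$ to control $\int_{N_1\setminus\Sigma}|A|^2$. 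Your proposal skips this entirely (your closing paragraph names the $L^2$ bound on $A$ as the crux but does not supply it), so as it stands the $W^{1,2}$ membership of $S_{\partial E}$ --- and hence the admissibility of $S_{\partial E}$ in the stability inequality, which is what the main theorems rest on --- is unproven.
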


\begin{proof}
As above, since the set $E$ is fixed here, we will drop the explicit dependence on $E$ of the various quantities. We divide the proof in few simple steps:

\medskip
\noindent
{\em Step 1: Proof of (i)}. Given \(\bar x\in \partial E\setminus \Sigma\) we let \(\bar y\) be a point achieving the maximum in the definition \eqref{e:S} of \(S\). This means that 
\[
E\subset H=\big\{ y\, :\,  \langle \nu (\bar x), y-\bar x\rangle \le \langle \nu (\bar x), \bar y-\bar x\rangle\big\}\qquad \bar y\in \partial E \cap \partial H. 
\]
Hence  (i) follows for Lemma \ref{lm:reg}.

\medskip
\noindent
{\em Step 2: Proof of (ii)}. Being the supremum of a family of (uniformly)  locally Lipschitz functions, \(S\) is locally Lipschitz on \(\partial E\setminus \Sigma\). Hence, in order to show that \(S\in W^{1,2}(\partial E\setminus \Sigma)\) it is  enough to show that 
\[
\int_{\partial E\setminus \Sigma} |\nabla S|^2 <+\infty.
\]
 Let $\bar x\in \partial E\backslash \Sigma$ and $\bar y$ be such that $S(\bar x)=S(\bar x,\bar y)$, then $S(x,\bar y)\le S(x)$ with equality at $\bar x$ and therefore $\nabla S(\bar x)=\nabla^x S(\bar x,\bar y)$. By \eqref{e:gradS}, we thus have  \(|\nabla S|(\bar x)\le \diam (E)|A|(\bar x)\) and we are left to prove that
 \(|A|\in L^2(\partial E \setminus \Sigma)\). This is a simple consequence of \eqref{e:stab}. Indeed, in case \(E\) minimises  \eqref{e:minun} it is enough to set \(\varphi\equiv 1\) (which is possible thanks to Lemma \ref{lm:cap}) in the second variation inequality \eqref{e:stab} to obtain
\[
\int_{\partial E\setminus \Sigma} |A|^2\le  \int_{\partial E\setminus \Sigma} \tr (D^2 \Phi (\nu)A^2)\le C\int_{\partial E\setminus \Sigma} D_\nu g <+\infty,
\]
where we used the inequality \(|A|^2\le  \tr(D^2 \Phi (\nu) A^2)\) which follows from the ellipticity of \(\Phi\) (recall \eqref{e:unifellipt}). In the  case when \(E\) minimises  \eqref{e:min}, \(\varphi\equiv 1\) is not admissible anymore in \eqref{e:stab}.
However,  letting  \(N_1 \subset \subset N_2\) be two small neighborhoods of the singular set \(\Sigma\), one can   construct two  positive smooth functions \(\varphi_1\) and \(\varphi_2\) such that \(\spt\, \varphi_1\subset N_2\), \(\spt\, \varphi_2\subset \partial E \setminus N_2\), \(\varphi_1\equiv 1\) on \(N_1\)  and
\(\int_{\partial E} \varphi_1=\int_{\partial E} \varphi_2\). Since $\varphi_1-\varphi_2\in W^{1,2}(\partial E\backslash \Sigma)$, by Lemma \ref{lm:cap} we can plug it  in the stability inequality \eqref{e:stab} and deduce that 
\[
\begin{split}
\int_{N_1\backslash \Sigma} |A|^2 &\le   \int_{\partial E\backslash \Sigma} \varphi_1^2 \tr (D^2 \Phi (\nu) A^2)
\\
&\le  -\int_{\partial E\backslash \Sigma} \varphi_2^2 \tr (D^2 \Phi (\nu)A^2)+C\int_{\partial E\backslash \Sigma} \lt( |\nabla \varphi_1|^2+|\nabla \varphi_2|^2\rt) +C\int_{\partial E\backslash \Sigma} |D_\nu g|<+\infty.
\end{split}
\]
Since \( |A|\) is bounded on \(\partial E\setminus N_1\), this concludes the proof. 

\medskip
\noindent
{\em Step 3: Proof of (iii)}. The fact that a viscosity subsolution is a distributional subsolution is proved for instance in \cite[Theorem 1]{Ishii95}.
Hence, it is enough to show that \(S\) satisfies \eqref{e:distr} in the viscosity sense. Thanks to (i), we know  that for every \(\bar x\in \partial E\setminus \Sigma\) 
the point \(\bar y\) achieving the maximum in the definition of \(S\) is in \( \partial E\setminus \Sigma\). Therefore,   Lemma \ref{lm:key} implies that
\[
L_\Phi S (\bar x)\ge H^{\Phi}(\bar x)-H^{\Phi}(\bar y)+\langle \nabla H^{\Phi}(\bar x), \bar y-\bar x\rangle
\]
in the viscosity sense.  Differentiating  \eqref{e:stat} to get $\nabla H^\Phi=-\nabla g$ and subtracting  to both side of the above inequality 
\[
D_\nu g(\bar x) S(\bar x)=D_\nu g(\bar x)\langle \nu(\bar x), \bar y-\bar x\rangle,
\]
 we obtain
\[
\begin{split}
L_\Phi S (\bar x)&-D_\nu g(\bar x) S(\bar x)\\
&\ge H^{\Phi}(\bar x)-H^{\Phi}(\bar y)-\langle \nabla g(\bar x), \bar y-\bar x\rangle-D_\nu g(\bar x)\langle  \nu(\bar x), \bar y-\bar x\rangle
\\
&\stackrel{\eqref{e:stat}}{=}g(\bar y)-g(\bar x)-\langle D g(\bar x), (\bar y-\bar x)\rangle\ge \omega(|\bar x- \bar y|)
\end{split}
\] 
where the last inequality follows by \eqref{e:strictconv}. Since by definition\footnote{If we  knew that $\partial E$ is smooth, then actually $S(\bar x)\le C |\bar x-\bar y|^2$, where the constant  $C$ depends on the curvature and the diameter of $\partial E$.}  $S(\bar x)\le |\bar x-\bar y|$ and since $\omega$ is increasing, this concludes the proof of \eqref{e:distr}.
\end{proof}

We are now ready to prove  our  main results. We start by Theorem \eqref{thm:main2}.

\begin{proof}[Proof of  Theorem \ref{thm:main2}] 
As before, since we work here with a fixed set $E$, we will drop the explicit dependence  on $\partial E$ of the various quantities. Our aim  is to prove that \(S\equiv 0\), which by Lemma \ref{lm:conv} will imply the convexity of \(E\). 

By Lemma \ref{lm:keyupgrade}, \(S\in W^{1,2}(\partial E\setminus \Sigma)\) and thus by Lemma \ref{lm:cap} it can be  approximated  in \(W^{1,2}(\partial E\setminus \Sigma)\) by positive functions in \(C^2_c(\partial E\setminus \Sigma)\). In particular by \eqref{e:stab} (recall \eqref{e:ipp}):
\begin{equation}\label{e:stabS}
\int_ {\partial E\setminus \Sigma } (-L_\Phi S) S+ D_\nu g \,S^2\ge 0.
\end{equation}
Multiplying \eqref{e:distr} by $-S$ we obtain  the inequality 
\[
(- L_{\Phi} S)S + D_{\nu} g S^2\le -\omega(S) S,
\]
which after integration gives 
\begin{equation}\label{e:almostabsurd}
-\int_{\partial E\backslash \Sigma} \omega(S) S\ge\int_ {\partial E\setminus \Sigma }(- L_{\Phi} S)S + D_{\nu} g S^2.
\end{equation}
If $g$ is strictly  convex, this directly gives a contradiction with \eqref{e:stabS} unless $S\equiv 0$. By Lemma \ref{lm:conv}, this implies that $\partial E\subset \partial \co(E)$. 
Now, either by the Constancy Lemma \cite[4.1.31]{Federer69} or by the regularity of \(\partial E\)  (note that  that \(\Sigma=\emptyset\) by Lemma \ref{lm:reg}), \(\partial E=\partial \co(E)\) and hence, since \(E\) is bounded, \(E=\co(E)\).

If instead $g$ is convex but not strictly  convex, we obtain by \eqref{e:almostabsurd} and \eqref{e:stab} (using again Lemma \ref{lm:cap}) that 

\[
\int_ {\partial E\setminus \Sigma }S_{\partial E}(- L_{\Phi} S)S + D_{\nu} g S^2=0\le \min_{\varphi \in W^{1,2}(\partial E \setminus \Sigma)}\int_ {\partial E\setminus \Sigma }(- L_{\Phi} \varphi)\varphi + D_{\nu} g \varphi^2. 
\]
Computing the Euler-Lagrange equation we obtain that 
 \(L_{\Phi} S - D_{\nu} g S\equiv 0\) in \(W^{1,2}(\partial E \setminus \Sigma)\) and then by classical elliptic regularity that $S\in C^2(\partial E\backslash \Sigma)$. Let $M$ be a connected component of $\partial E\backslash \Sigma$ such that $M\cap \partial \co(E)\neq \emptyset$ (which exists by Lemma \ref{lm:reg}). Since $S\ge 0$ on $M$ and
 $S=0$ on $M\cap \partial \co(E)$ by Lemma \ref{lm:conv},  the minimum principle \cite[Theorem 2.10]{hanlin} implies that \(S\equiv 0\) on $M$.   Arguing as above we obtain that $M=\partial \co(E)$, which  in turn gives that $E=\co(E)\backslash F$ for some set $F\subset \subset \co(E)$ and thus
 \[
 \int_{\partial \co(E)} \Phi(\nu_{\partial \co(E)})= \int_{\partial E} \Phi(\nu_{\partial E})- \int_{\partial F} \Phi(\nu_{\partial F})
 \]
 This contradicts the outward minimising property  \eqref{e:minextern} of $E$ unless $F=\emptyset$. Therefore, we can again conclude that $E$ is convex.

\end{proof}

 To prove Theorem \ref{thm:main} we can not plug \(S_{\partial E}\) anymore in the stability inequality since it does not satisfy the zero average constraint.
 Nevertheless, if we assume by contradiction that \(\partial E\) has several  connected components,  then at least one of them must be (unconditionally) stable and this allows to argue as above. In dimension $d>2$ since we can not {\it a priori} 
 guarantee that there exists a connected component which is both stable and intersects $\partial \co(E)$, we need to impose the strict convexity of $g$ to conclude. 
 
 \begin{proof}[Proof of Theorem \ref{thm:main}] As before, we drop the dependence on  $\partial E$ of the various quantities.
 We will actually prove a slightly stronger result with respect to  the connectedness of  \(\partial E\), namely that we can not partition \(\partial E\) as \(\partial E=M_1\cup M_2\) with \(M_1\cap M_2\subset \Sigma\) and \(\HH^{d-1}(M_1), \HH^{d-1}(M_2)>0\). Let us assume for the sake of contradiction that this is not the case. First  we claim that  that there exists \(i\in \{1,2\}\) such that 
 \begin{equation*}
\int_ {\partial M_i\setminus \Sigma } (-L_{\Phi} \varphi)\varphi+ D_\nu g \,\varphi^2\ge 0 \qquad 
\textrm{for all \(\varphi\in C^1_c(M_i \setminus \Sigma) \).}
\end{equation*}
Indeed otherwise, by homogeneity we can find \(\varphi_1\in C^1_c(M_1\setminus \Sigma)\), \(\varphi_2\in C^1_c(M_2\setminus \Sigma)\) such that \(\int_{\partial E} \varphi_1=\int_{\partial E} \varphi_2\) and satisfying 
 \begin{equation}\label{e:onestable}
\int_ {\partial M_i\setminus \Sigma } (-L_{\Phi} \varphi_i)\varphi_i+ D_\nu g \,\varphi_i^2<0
 \qquad \textrm{for \(i=1,2\).}
\end{equation}
Since \(\spt\varphi_1\cap \spt \varphi_2=\emptyset\), the function \(\bar{\varphi}=\varphi_1-\varphi_2\) satisfies \(\int_{\partial E} \bar \varphi=0\) and thus \eqref{e:onestable} would lead to 
 a contradiction with \eqref{e:stab}. Hence either  \(M_1\) or \(M_2\) is stable with respect to all possible variations. For the sake of the argument assume that it is \(M_1\). 
 
\medskip
\noindent
{\em Case 1: \(g\) is strictly convex}: 
 Arguing as in the  proof of  Theorem \ref{thm:main2}, we get (compare   that with \eqref{e:almostabsurd})
 \[
 -\int_{M_1\backslash \Sigma} \omega(S_E) S_E\ge\int_ {M_1\setminus \Sigma }(- L_{\Phi} S)S + D_{\nu} g S^2.
 \]
which by strict convexity of $g$ yields \(S_{E}\equiv 0\) on \(M_1\) and then \(M_1=\partial \co(E)\) with  \(\nu_{\partial E}=\nu_{\partial \co(E)}\) on \(M_1\). Hence, if \(\HH^{d-1}(M_2)>0\),
\[
\int_{\partial \co(E)}\Phi(\nu_{\partial \co(E)})<  \int_{\partial E}\Phi(\nu_{\partial E}),
\]
which contradicts \eqref{e:minextern} and thus \(\HH^{d-1}(M_2)=0\). 

\medskip

{\em Case 2:  \(d=2\) }:  By \cite{McCann98} we know that $E$ is a union of convex sets which are  smooth  by Theorem \ref{thm:promin}. If $ M_1\cap \partial \co(E)\neq \emptyset$, then as in the proof of Theorem \ref{thm:main2} the minimum principle implies that \(S_{E}\equiv0\) on \(M_1\) and thus, by the same arguments as above, $M_1=\partial \co(E)$ and we are done. 

Otherwise, $\partial E\cap \partial \co(E)\subset M_2$. If \(M_2\) is disconnected, by the same arguments as above, at  least one of the connected component must be stable. If this intersects \(\partial \co(E)\) we can repeat the same argument above and conclude. Hence  we can assume that \(M_2\) is connected and unstable,  in particular  $M_2=\partial K$
for some convex set $K$.  We claim that  $\co(E)=K$. Indeed, if this is not the case then there exists an extremal point $x$ of $\partial \co(E)$ which is not in $\partial K$. Since $x\in \partial E$, this contradicts $\partial \co(E) \cap \partial E\subset \partial K$.

\medskip
Connectedness of \(\partial E\) now easily follows both cases: indeed assume that \(\partial E=M_1\cup M_2\) with \(M_1\) and \(M_2\) closed and such that  \(M_1\cap M_2=\emptyset\). Then, we may assume for instance that  \(\HH^{d-1}(M_2)=0\). Thus \(M_2\subset \Sigma\) and \(\partial E \setminus \Sigma \subset M_1\). However regular points are dense in the boundary and thus \(M_2\subset \Sigma \subset \overline{\partial E\setminus \Sigma}\subset M_1\), a contradiction.
\end{proof}

 It is clear that the above proofs mostly rests on the stability inequality \eqref{e:stab} and that minimality is only used to have enough regularity to make the computations in Lemma \ref{lm:key} and Lemma \ref{lm:cap}. The proof can thus be extended to smooth stable critical points
 (or volume preserving stable critical points) of  \(\mathcal F\). For example, we have
 \begin{theorem}\label{thm:stable}
  Let $\Phi$ and $g$ be as in Theorem \ref{thm:main2} and let $E$ be a smooth, bounded and stable critical point of $\mathcal{F}$, then $E$ is convex.
 \end{theorem}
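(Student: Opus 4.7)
The plan is to replay the proof of Theorem \ref{thm:main2} almost verbatim, using that smoothness of \(E\) trivialises every regularity issue. Since \(\partial E\) is a compact \(C^3\) closed hypersurface with empty singular set, Lemmas~\ref{lm:reg} and~\ref{lm:cap} become unnecessary, the computations of Lemma~\ref{lm:key} apply at every point of \(\partial E\), and \(S=S_{\partial E}\) is locally Lipschitz and semiconvex on all of \(\partial E\). Moreover \(|A|\in L^\infty(\partial E)\subset L^2(\partial E)\) directly, so \(S\in W^{1,2}(\partial E)\). The stability hypothesis gives~\eqref{e:stab} for every test function \(\varphi\in W^{1,2}(\partial E)\), without any zero-average constraint, since we work with the unconstrained functional \(\mathcal F\).

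The analytic key is to establish, as in Lemma~\ref{lm:keyupgrade}(iii), the inequality
\[
L_\Phi S-D_\nu g\, S\ge \omega(S)\qquad\text{on } \partial E,
\]
in both the viscosity and distributional sense, where \(\omega\) is the convexity modulus of \(g\). This follows from Lemma~\ref{lm:key} together with the identity \(\nabla H^\Phi_{\partial E}=-\nabla g\) (obtained by differentiating~\eqref{e:stat}) and the defining property~\eqref{e:strictconv} of \(\omega\). Plugging \(S\) into the stability inequality then gives
\[
\int_{\partial E}(-L_\Phi S)\,S+D_\nu g\, S^2\ge 0,
\]
while multiplying the distributional inequality by \(-S\le 0\) and integrating yields
\[
\int_{\partial E}(-L_\Phi S)\,S+D_\nu g\, S^2\le -\int_{\partial E}\omega(S)\, S\le 0.
\]
Combining these forces \(\int_{\partial E}\omega(S)\,S=0\). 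When \(g\) is strictly convex, \(\omega>0\) off the origin, so \(S\equiv 0\); Lemma~\ref{lm:conv} and the smoothness and boundedness of \(E\) then yield \(E=\co(E)\).

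The remaining case of \(g\) merely convex is handled as in the proof of Theorem~\ref{thm:main2}: the equality above forces \(L_\Phi S-D_\nu g\, S=0\) weakly, classical elliptic regularity lifts \(S\) to \(C^2(\partial E)\), and on any connected component \(M\subset \partial E\) meeting \(\partial\co(E)\) the strong minimum principle forces \(S\equiv 0\), hence \(M=\partial\co(E)\). The main obstacle here---and the one place where the original proof used minimality rather than mere stability---is to rule out an inner ``hole'' \(F\subset\subset \co(E)\) with \(\partial F\subset \partial E\): Theorem~\ref{thm:main2} excludes this via the outer minimality~\eqref{e:minextern}, which is not available in the stable setting. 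To bypass this, I would either (a) apply the strictly convex case to the perturbed potentials \(g_\varepsilon=g+\varepsilon|x|^2\) (under which any smooth stable critical point of \(\mathcal F\) remains a smooth stable critical point of the perturbed energy for small \(\varepsilon\)) and pass to the limit, or (b) observe that the inner component \(\partial F\), viewed as a smooth closed hypersurface with outward normal into \(F\), is itself a smooth stable critical point of a related energy---\(\Phi\) is replaced by \(\widetilde\Phi(\nu)=\Phi(-\nu)\) and the potential acquires a sign change---so that iterating the strictly convex analysis of Theorem~\ref{thm:main2} on this inner boundary reaches the desired contradiction.
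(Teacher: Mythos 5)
Your replay of the proof of Theorem \ref{thm:main2} is fine up to, and including, the conclusion that $E=\co(E)\setminus F$ for some (possibly empty) smooth compactly contained set $F$: smoothness removes all the issues about $\Sigma$, stability without volume constraint gives \eqref{e:stab} for all test functions, $S\in W^{1,2}(\partial E)$ trivially, and the chain $Q(S)\le -\int\omega(S)S\le 0\le \min Q$ together with the Euler--Lagrange equation, elliptic regularity and the minimum principle is exactly as in the paper. You have also correctly located the one genuine obstruction: the exclusion of the inner hole $F$, which in Theorem \ref{thm:main2} was done via the outer minimising property \eqref{e:minextern}, unavailable for mere stable critical points. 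However, neither of your two proposed workarounds is viable. For (a): if $H^\Phi_{\partial E}+g=0$ on $\partial E$, then $H^\Phi_{\partial E}+g_\varepsilon=\varepsilon|x|^2\neq 0$ on $\partial E$, so $E$ is \emph{not} a critical point of the perturbed energy; the Euler--Lagrange equation is not preserved under perturbation of the potential, and producing a nearby perturbed critical point would require a nondegeneracy/implicit-function argument that stability alone does not provide. For (b): on $\partial F$ one has $\nu_{\partial E}=-\nu_{\partial F}$, so $F$ becomes a stable critical point of the energy with anisotropy $\widetilde\Phi(\nu)=\Phi(-\nu)$ and potential $-g$, which is \emph{concave}; the entire two-point mechanism (the sign in $L_\Phi S-D_\nu g\,S\ge\omega(S)\ge 0$) rests on convexity of the potential, so no contradiction comes out of iterating the argument on $\partial F$ --- indeed a convex $F$ would satisfy $S_{\partial F}\equiv 0$ without any inconsistency.

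The paper closes this gap with a different, and essential, idea: a pointwise sign on the anisotropic mean curvature. At a minimum $\bar x$ of $H^\Phi_{\partial E}$, which by \eqref{e:stat} is a maximum of $g$ on $\partial E$ (hence on $\overline E$ by convexity), one evaluates $0\ge \tr(D^2\Phi(\nu)\nabla^2 g(\bar x))=\tr(D^2\Phi(\nu)D^2 g(\bar x))-H^{\Phi}_{\partial E}(\bar x)D_\nu g(\bar x)$; since both $\tr(D^2\Phi(\nu)D^2g(\bar x))$ and $D_\nu g(\bar x)$ are nonnegative by convexity, either $H^\Phi_{\partial E}\ge 0$ everywhere, or $Dg(\bar x)=0$, in which case $g$ is constant on $\overline E$, $H^\Phi_{\partial E}$ is constant, and the anisotropic Alexandrov theorem makes $E$ a Wulff shape. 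In the first alternative, a point of $\partial F$ where $H^\Phi_{\partial F}>0$ (which always exists on a compact hypersurface) gives $H^\Phi_{\partial E}=-H^\Phi_{\partial F}<0$, a contradiction that kills the hole. This maximum-principle-on-$H^\Phi$ step is the missing ingredient in your proposal; without it (or some substitute for \eqref{e:minextern}) the argument does not close.
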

\begin{proof}
 Arguing as in the proof of Theorem \ref{thm:main2}, we obtain that $E=\co(E)\backslash F$ for some smooth set $F$. In order to reach a contradiction, we first claim that $H^\Phi_{\partial E}\ge 0$ (recall the definition \eqref{def:HPhi})  on $\partial E$. Indeed, by \eqref{e:stat}, a minimum $\bar x$ of $H_{\partial E}^{\Phi}$ corresponds to a maximum of $g$ on $\partial E$ and thus,
 \[
  0\ge \tr(D^2\Phi(\nu_{\partial E}) \nabla^2 g(\bar x))\stackrel{\eqref{e:hessian-1}}{=} \tr(D^2\Phi(\nu_{\partial E}) D^2 g(\bar x))-H^{\Phi}_{\partial E}(\bar x) D_\nu g(\bar x).
 \]
From the convexity of $g$, we get that $\tr(D^2\Phi(\nu_{\partial E}) D^2 g(\bar x))\ge 0$. By convexity of $g$ again, $\bar x$ is a maximum of $g$ on $\overline{E}$ and thus $D_\nu g(\bar x)\ge 0$. If the inequality is strict, then $H^\Phi_{\partial E}(\bar x)\ge 0$ as claimed, otherwise since also $\nabla g(\bar x)=0$, we actually have $D g(\bar x)=0$ and $\bar x$
is the minimizer of $g$ on $\R^d$ from which $g$ is constant on $\overline{E}$ and thus by \eqref{e:stat}, $H^\Phi_{\partial E}$ is also constant on $\partial E$. By the anisotropic version of Alexandrov Theorem \cite{he}, this implies that $E$ is actually the Wulff shape of $\Phi$ and is in particular convex. Now if $\bar x\in \partial F$ is such that $H^\Phi_{\partial F}(\bar x)>0$ (which always exists), since $\nu_{\partial E}(\bar x)=-\nu_{\partial F}(\bar x)$, we have $H^{\Phi}_{\partial E}(\bar x)=-H^\Phi_{\partial F}(\bar x)<0$, which gives the desired contradiction.
\end{proof}

 \begin{remark}\label{rmk:stability}
As in the case of minimizers,  it is possible to allow for a small singular set \(\Sigma\) once one knows that the supremum in the definition \(S_{\partial E}\) is achieved by a point \(\bar y\) in the regular set and that Lemma \ref{lm:cap} is in force.  
 
 In the case of the isotropic area functional, Allard's  theorem can be applied to  critical points (see  \cite{Simon83}),  and thus one can extend Lemma \ref{lm:reg} to this setting.  Moreover, using  the  monotonicity formula, Lemma \ref{lm:cap} can also be extended to critical points with a singular set \(\Sigma\) of vanishing \(\HH^{d-3}\) measure. 
 
In the setting of anisotropic surface tensions,  however  both the analog of  Allard's theorem  and the density lower bound \eqref{e:dens} are missing for critical points (see however \cite{Allard86}). 
\end{remark}
 
 \appendix
 
 \section{An approximation Lemma}\label{sec:ap}
 For the reader's convenience we report here the following simple  (and well-known) lemma whose proof follows by a standard capacitary argument (see for instance \cite{De-PhilippisMaggi17,SternZum1}).
 
 \begin{lemma}\label{lm:cap}
 Let \(E\), \(\Phi\), \(g\) and \(\Sigma\) be  as in Theorem \ref{thm:promin}. Then \(C_c^{2}(\partial E\setminus \Sigma)\) is dense in \(W^{1,2}(\partial E\setminus \Sigma)\) with respect to the strong \(W^{1,2}(\partial E \setminus \Sigma)\) topology.
 \end{lemma}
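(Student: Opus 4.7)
The plan is to run a standard capacitary argument exploiting the smallness of the singular set: by Theorem \ref{thm:promin} we have $\HH^{d-3}(\Sigma)=0$, so $\Sigma$ has codimension at least $2$ inside the $(d-1)$-dimensional manifold $\partial E\setminus \Sigma$, which is exactly the borderline condition ensuring that $\Sigma$ has zero $W^{1,2}$-capacity.

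First, given $u\in W^{1,2}(\partial E\setminus \Sigma)$, a routine reduction allows us to assume $u$ is bounded and $C^2$. Indeed, truncating at level $M$ and using dominated convergence reduces to the bounded case, and then a Meyer--Serrin type smoothing performed locally in charts away from $\Sigma$ (using a locally finite atlas of $\partial E\setminus \Sigma$ and a subordinate partition of unity) gives bounded $C^2$ approximants in the strong $W^{1,2}$ topology. So it is enough to cut off such a $u$ near $\Sigma$ without losing $W^{1,2}$-mass.

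Next, I construct cutoffs $\eta_\eps\in C^\infty(\partial E\setminus \Sigma)$ with $0\le \eta_\eps\le 1$, $\eta_\eps\equiv 0$ in a neighborhood of $\Sigma$, $\eta_\eps\equiv 1$ outside a slightly larger neighborhood, and $\|\nabla \eta_\eps\|_{L^2(\partial E\setminus \Sigma)}\to 0$. Since $\HH^{d-3}(\Sigma)=0$, for each $\eps>0$ we may cover $\Sigma$ by balls $\{B_{r_i}(x_i)\}$ with $\sum_i r_i^{d-3}<\eps$. Glue together standard radial bumps $\chi_i$ supported in $B_{2r_i}$, equal to $0$ on $B_{r_i}$, with $|D\chi_i|\le C/r_i$, and set $\eta_\eps=\prod_i \chi_i$ (or equivalently $\eta_\eps=\min_i \chi_i$ mollified). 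Since $E$ is a $(\Lambda,r_0)$-minimizer of the anisotropic perimeter (see \eqref{e:Lambdamin}), the standard density estimates give $\HH^{d-1}(B_{2r_i}(x_i)\cap \partial E)\le C r_i^{d-1}$ for $r_i$ small, so
\begin{equation*}
\int_{\partial E\setminus \Sigma} |\nabla \eta_\eps|^2 \le C\sum_i \frac{\HH^{d-1}(B_{2r_i}(x_i)\cap \partial E)}{r_i^2}\le C\sum_i r_i^{d-3}<C\eps.
\end{equation*}

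Finally, I use these cutoffs to approximate. Writing
\begin{equation*}
\nabla(\eta_\eps u)-\nabla u=(\eta_\eps-1)\nabla u+u\,\nabla \eta_\eps,
\end{equation*}
the first term tends to $0$ in $L^2$ by dominated convergence (the set $\{\eta_\eps<1\}$ shrinks to a negligible neighborhood of $\Sigma$), and the second is bounded by $\|u\|_\infty\|\nabla \eta_\eps\|_{L^2}\to 0$. Similarly $\eta_\eps u\to u$ in $L^2$. Since $\eta_\eps u$ is bounded, $C^2$, and compactly supported in $\partial E\setminus \Sigma$ (a final mild mollification smooths out any corners coming from the gluing procedure, exactly as in Step~1), we obtain the desired $C^2_c(\partial E\setminus \Sigma)$ approximation. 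The only nontrivial step is the verification of the density upper bound used in the bound on $\|\nabla \eta_\eps\|_{L^2}$; this, however, is a standard consequence of the $\Lambda$-minimality of $E$ and requires no new ingredient beyond Theorem~\ref{thm:promin}.
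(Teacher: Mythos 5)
Your proposal is correct and follows essentially the same capacitary argument as the paper: cover $\Sigma$ by balls with $\sum_i r_i^{d-3}$ small, use the density upper bound coming from $(\Lambda,r_0)$-minimality to control the Dirichlet energy of the cutoffs, and conclude by truncation and smoothing (your $\eta_\eps=\min_i\chi_i$ is exactly the paper's $1-\max_k\varphi_k$). The only slip is the phrase ``supported in $B_{2r_i}$'' for $\chi_i$, which read literally would make $\prod_i\chi_i$ vanish outside the annuli; what you clearly intend, and what the argument needs, is $\chi_i\equiv 1$ outside $B_{2r_i}$ and $\chi_i\equiv 0$ on $B_{r_i}$, i.e.\ $1-\chi_i$ supported in $B_{2r_i}$.
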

 
 \begin{proof}Obviously  \(\Sigma=\emptyset\) for \(d=2,3\), hence  we assume that \(d\ge 4\).  As already observed in the proof of Theorem \ref{thm:promin}, \(E\) is a \((\Lambda, r_0)\) minimiser of 
 \[
 F\mapsto \int_{\partial^* F}\Phi(\nu_{\partial^* F}).
 \]
 In particular it satisfies the  estimate (see \cite{Maggi12})
 \begin{equation}\label{e:dens}
 \HH^{d-1}(\partial E\cap B(x,r))\le C r^{d-1}\textrm{for all \(x\in \partial E\), \(r\le r_0\)}
 \end{equation}
 for a constant \(C=C(\Lambda,d)>0\). Since \(\HH^{d-3}(\Sigma)=0\) and \(\Sigma\) is compact, for every \(\eps>0\) we can find finitely many balls \(\{B(x_k,r_k)\}_{k=1}^N\) centered in \(\Sigma\) and  such that 
 \[
 \Sigma\subset \bigcup_{k=1}^N B(x_k,r_k)\qquad \sum_{k=1}^N r_{k}^{d-3}\le \eps^{d-3}.
 \]
 For each of this balls we consider \(\varphi_k\in C^{2}_c(B(x_k,2r_k),[0,1])\) satisfying \(\varphi_k\equiv 1\) on \(B(x_k,r_k)\) and \(|D \varphi_k|\le 2/r_k\). Let 
 \[
 \psi_\eps(x)=\max_{k=1,\dots, N} \varphi_k(x),
 \]
 then \(\psi_\eps\in {\rm Lip} (\R^d, [0,1])\), \(\psi=1\) on \(\Sigma\), \(\psi_\eps\equiv 0 \) outside \(\mathcal N_{2\eps}(\Sigma)\), a \(2\eps\) neighborhood of \(\Sigma\). Moreover, by \eqref{e:dens}, 
 \[
 \int_{\partial E} |D \psi_\eps|^2\le 4\sum_{k=1}^N  \frac{\HH^{d-1}(\partial E\cap B(x,2r_k))}{r_k^2}\le C \sum_{k=1}^N r_k^{d-3}\le C\eps^{d-3}.
 \]
 Let now \(u\in W^{1,2}(\partial E \setminus \Sigma)\). By approximation we may assume that \(u\) is bounded and, by scaling that \(\|u\|_\infty\le 1\). Let \(v_\eps=(1-\psi_\eps)u\), then 
 \[
\int_{\partial E}|u-v_\eps|^2+|\nabla u-\nabla v_\eps|^2\le \int_{\mathcal N_{2\eps}(\Sigma)}\lt( u^2+|\nabla u|^2\rt)+2\int_{\partial E}|D\psi_\eps|^2\to 0
 \]
 as \(\eps\to0\). Since \(\spt\,  v_\eps\cap \Sigma=\emptyset\) a simple smoothing argument concludes the proof.
 
 \end{proof}
\bibliographystyle{acm}
\bibliography{convex}

\end{document}